\documentclass[12pt]{amsart}

\usepackage{amsmath,amssymb, amsthm,amscd,amsfonts,graphicx,fancyhdr}
\usepackage[marginratio=1:1,margin=.8in]{geometry}
\usepackage{url, hyperref, color, verbatim}%,natbib}%, ulem}
\usepackage{mathtools}
\usepackage{tikz}
\usetikzlibrary{matrix}
\usepackage{tabularx}
\usepackage{booktabs}
\usepackage{bbm}

\usepackage{cleveref}  % so use.e.g.,  \Cref{codim}

\DeclareMathOperator{\Ann}{Ann}

\newcommand{\fm}{\mathfrak m}

\theoremstyle{plain} %\numberwithin{equation}{section}
\newtheorem{thm}{Theorem}[section]
\newtheorem{corollary}[thm]{Corollary}

\newtheorem{cor}[thm]{Corollary}
\newtheorem{conj}[thm]{Conjecture}
\newtheorem{lm}[thm]{Lemma}
\newtheorem{proposition}[thm]{Proposition}

\theoremstyle{definition}
\newtheorem{defn}[thm]{Definition}

\newtheorem{remark}[thm]{Remark}
\newtheorem{example}[thm]{Example}

\newtheorem{question}[thm]{Question}

\usepackage{tikz}
\usepackage{stmaryrd}

\pagestyle{plain}

\newcommand{\expect}[1]{\mathbb{E}\left[#1\right]}
\newcommand{\rmi}{{I}}
\newcommand{\ideal}[1]{\left<{#1}\right>}
\newcommand{\rmidist}{\mathcal{I}}

\newcommand{\prob}[1]{\mathbb{P}(#1)}
\newcommand{\h}{\underline{h}}
\newcommand{\R}{\mathbb{R}}
\newcommand{\soc}[1]{\mathrm{Soc}(#1)}
\newcommand{\mon}{\mathrm{Mon}}

\newcommand{\N}{\mathbb{N}}
\newcommand{\er}{Erd\"os-R\'enyi}

\begin{document}

\title[WLP and randomness]{The Weak Lefschetz property and unimodality of Hilbert functions of random monomial algebras}

\author[U.\ Nagel]{Uwe Nagel} 
\address{Department of
  Mathematics, University of Kentucky, 715 Patterson Office Tower,
  Lexington, KY 40506-0027, USA} 
\email{uwe.nagel@uky.edu} 
    
\author{Sonja Petrovi\'c}
\address{Department of Applied Mathematics, Illinois Institute of Technology, Chicago, IL 60616, USA} 
\email{sonja.petrovic@iit.edu}

%\author[Nagel and Petrovi\'c]{Uwe Nagel and  Sonja Petrovi\'c}\thanks{uwe.nagel@uky.edu, University of Kentucky. sonja.petrovic@iit.edu, Illinois Institute of Technology. SP is partially supported by the Simons Foundation's Travel Support for Mathematicians Gift 854770 and  DOE/SC award number \#1010629.} 

\begin{abstract}

In this work, we investigate  the presence of the weak Lefschetz property (WLP) and Hilbert functions for various types of random standard graded Artinian algebras. If an algebra has the WLP then its Hilbert function is unimodal. 

Using probabilistic models for random monomial algebras, our results and simulations suggest that in each considered regime  the Hilbert functions of the produced  algebras are unimodal with high probability. The WLP appears to be present with high probability most of the time. However, we propose that there is one scenario where the generated algebras fail to have the WLP with high probability.

%Somewhere??? We hope to initiate the study of unimodality of Hilbert functions and the weak Lefschetz property using probabilistic methods. 
\end{abstract}

\maketitle

%%%%%%%%%%%%%%%%%%%%%%%%%%%%%

\section{Introduction}

An Artinian graded algebra $A$ over a field is said to have the weak Lefschetz property (WLP) if there is a linear form such that multiplication by it induces, for each integer $j$,  maps $[A]_{j-1} \to [A]_j$ between consecutive degree components that always have maximal rank, i.e., are injective or surjective. The name is derived from the Hard Lefschetz Theorem, which establishes the WLP for the cohomology ring of a smooth manifold.  The Hilbert function of an algebra having the weak Lefschetz property is subject to strict constraints; for example, it must be unimodal, meaning that it cannot strictly increase after strictly decreasing once. 

Although the presence of the WLP can be expressed as a linear algebra problem, deciding it has proven to be very challenging in many cases. There are numerous results---but also just as many conjectures---about whether WLP holds for a given standard graded Artinian algebra (see, e.g., \cite{AHK, APP, APPS, BMMN-22, BMMN-23, BL, FMM, KX}).  In this paper, we propose a new point of view. We investigate the conditions under which this property holds by using probabilistic models for level and standard  graded Artinian algebras. We rely on randomness and probabilistic models because the main goal is to understand ``average" (or typical) behavior of monomial algebras, with respect to the weak Lefschetz property.   

%
%{\color{magenta}Opening story... } 
%The weak Lefschetz property (WLP) means that the map defined by the multiplication by a general linear form has maximal rank \cite{HMNW}. The $h$-vector of an algebra having the weak Lefschetz property is subject to strict constraints; for example, it must be  unimodal, meaning that it cannot strictly increase after  strictly decreasing once. 
%There are numerous results---but also just as many conjectures---about whether WLP holds for a given standard graded Artinian algebra (see, e.g., \cite{AHK, APP, APPS, BMMN-22, BMMN-23, BL, FMM, KX}).  In this paper, we investigate the conditions under which this property holds by using probabilistic models for level and standard  graded Artinian algebras. We rely on randomness and probabilistic models because the main goal is to understand ``average" (or: typical) behavior of monomial algebras, with respect to the Weak Lefschetz property.   

 The use of probabilistic methods has been pioneered in graph theory (see, e.g., \cite{ER, Gilbert}). The Erd\"os-R\'enyi model has been adapted in \cite{dLPSSW} to introduce probabilistic models for random monomial ideals.  Main results therein provide precise probabilistic statements about various algebraic invariants of (coordinate rings of) monomial ideals:  the probability distributions, expectations and thresholds for events involving monomial ideals with given Hilbert function, Krull dimension, first graded Betti numbers. Here, we further adapt this approach to study the WLP and the unimodality of Hilbert functions of quotients of polynomial rings by monomial ideals. These algebras are either constructed by using three different regimes for producing the monomial ideals or as random level algebras by selecting monomials of the socle. The Hilbert functions of the latter algebras are also known  as pure $O$-sequences, see \cite{BMMNZ}. 
 
Multiplication by a general linear form on an algebra $A$ is always injective if it has positive depth. If $A$ is Artinian, injectivity cannot always be true, but one often expects that multiplication between consecutive degrees has always maximal rank, i.e., $A$ has the WLP.  Formal probabilistic models for randomly generated algebras offer a controlled and principled way to test this expectation by constructing samples of algebras  using various regimes. Our results and simulations in this work suggest that in each of the considered regimes the Hilbert functions of the produced algebras are unimodal \emph{with high probability}. The situation is more nuanced regarding the WLP. Although the WLP appears to be present with high probability most of the time, we propose in  \Cref{conj:failure WLP} that there is one scenario where the produced  algebras fail to have the WLP with high probability. Our point of view offers many open questions, and we hope this note will motivate further investigations. 

% A sample result gives {a specific regime for generating random level algebras}. We expect to be able to prove  that the $h$-vector is unimodal \emph{with high probability}, and that the \emph{expected $h$-vector} is unimodal. We simulate....... to indicate... and prove....  Similar results hold for pure $O$-sequences, which arise as  Hilbert functions of monomial level algebras \cite{BMMNZ}; the question of their unimodality is open in general.  We can also quantify when equigenerated monomial algebras  \cite{AB} have the weak Lefschetz property  with high probability, using our  canonical probabilistic model for random algebras. Results of this type have proved to be foundational in discrete areas (e.g. graph theory), and we see them under the same light in commutative algebra. For example, the power of the probabilistic approach is often expressed by showing the existence of an algebra with some special property  and quantifying the probability with which such an object exists. {\color{magenta}refer to  theorem?} 

We briefly describe the structure of this work. In the following section, we review the main definitions necessary for understanding the technical results and simulations. In \Cref{sec:random alg}, we consider three regimes for producing random Artinian algebras with monomial relations. For each regime, we determine the expected value of the Hilbert functions is any degree. Evaluations and simulations suggest that these expected Hilbert functions are always unimodal. However, \Cref{conj:failure WLP} identifies a scenario, where we propose the WLP fails almost surely. We continue by studying random monomial level algebras. In \Cref{sec-Pure O}, we consider their Hilbert functions, a.k.a.\ pure $O$-sequences. In \Cref{thm:unimodal level} we show that under a natural model for random level algebras, the expected Hilbert function is always log-concave, and so in particular unimodal.  In \Cref{sec:WLP level}, we argue that random monomial level algebras have the WLP with high probability if the probability $p$ for selecting a monomial in the socle in close to zero or one. However, our simulations do not seem to reveal a clear pattern  of thresholds for the emergence of the WLP for other values of $p$.

%%%%%%%%%%%%%%%%%%

\section{Background and notation: level algebras, weak Lefschetz property, randomness, and sampling}

Let $S=\mathbbm{k}[x_1,..,.x_n]$ be the graded polynomial ring in $n$ variables over an infinite field $\mathbbm{k}$ where $\deg(x_i)=1$ for each $i$.  We will be interested in properties of standard graded Artinian $S$-algebras
\[
A = S/I = \bigoplus_{d \ge 0} A_d.
\]  The \emph{Hilbert function} $\h_A:\mathbb{N}\to\mathbb{N}$ of $A$ is
\[
\h_A=(h_0=1,h_1,h_2,\ldots)
\] where $h_d=h_A(d)=\dim_{\mathbbm{k}}A_d$ is the $\mathbbm{k}$-vector space dimension of the $d$-th graded component of $A$.  The \emph{socle} of $A$ is the annihilator of the homogeneous maximal ideal $\mathfrak{m}\subseteq A$, i.e.
\[
\soc{A}=\mathrm{ann}_A(\mathfrak{m}).
\]  The socle of $A$ is thus a homogeneous ideal of $S$.  Letting $s_i$ denote the number of degree $i$ generators of $\soc{A}$, the \emph{socle vector} $s(A)$ of $A$ is given by
\[
s(A) = (s_0=0,s_1,s_2...).
\]  
Note that $s_0 = 0$ if $A \neq \mathbbm{k}$. 
Since $A$ is Artinian, there exists a smallest non-negative integer $e$ such that $h_e\not=0$ and $h_{d}=0$ for all $d>e$, so that in this case $\h_A$ is a finite sequence to which we refer as the $h$-vector of $A$.   Note also that $s_e=h_e$ and $s_{d}=0$ for all $d>e$.
The integer $e$ is called the \emph{socle degree} of $A$.  If the generators of $\soc A$ are concentrated in a single degree, i.e. if $s(A)$ has the form
\[
s(A) = (0,0,\ldots,0,s_e)
\] where $s_e>0$, then the algebra $A$ is said to be \emph{level}.  The \emph{type} of a level algebra is the integer $s_e$.  For example, $A$ is Gorenstein if and only if it is level of type 1.

\begin{defn} A standard graded Artinian $S$-algebra $A$ has the \emph{weak Lefschetz property} (WLP) if there exists a   linear form $L\in A_1$ with the property that the multiplication map
\[
\times L:A_d\to A_{d+1}
\] is full rank (i.e. is either injective or surjective) for every $d\ge0$.  The linear form $L$ is called a \emph{Lefschetz element} for $A$.
\end{defn}

The $h$-vector of an algebra having the weak Lefschetz property is subject to strict constraints (see \cite[Proposition 3.5]{HMNW}).  For example, if $A$ has WLP, then $\h_A$ must be \emph{unimodal}, i.e. it cannot strictly increase after a strict decrease.  For level algebras, this is also a consequence of the following result.

\begin{thm}[{\cite[Propostion 2.1]{MMN}}] Let $A$ be a standard graded Artinian algebra. Let $L\in A_1$ be a general linear form and for $d\ge0$ consider the map $\phi_d:A_{d}\to A_{d+1}$ defined by multiplication by $L$.
\begin{enumerate}
\item[a)] If $\phi_{d_0}$ is surjective for some $d_0$, then $\phi_d$ is surjective for all $d\ge d_0$.
\item[b)] If $A$ is level and $\phi_{d_0}$ is injective for some $d_0$, then $\phi_{d_0}$ is injective for all $d\le d_0$.
\item[c)] In particular, if $A$ is level and $h_{d_0}=h_{d_0+1}$ for some $d_0$, then $A$ has WLP if and only if $\phi_{d_0}$ is injective (and hence an isomorphism).
\end{enumerate}
\end{thm}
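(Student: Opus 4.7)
The three parts can be handled essentially independently, with (c) falling out of (a) and (b) by a dimension-counting argument, so I would prove them in the order (a), (b), (c). Only part (b) will really use the level hypothesis.

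For (a), the plan is a one-line forward induction based on the standard-graded identity $A_{d+1} = A_1 \cdot A_d$. If $\phi_{d_0}$ is surjective, so that $A_{d_0+1} = L \cdot A_{d_0}$, then
\[
A_{d_0+2} = A_1 \cdot A_{d_0+1} = A_1 \cdot L \cdot A_{d_0} = L \cdot (A_1 \cdot A_{d_0}) = L \cdot A_{d_0+1},
\]
which says that $\phi_{d_0+1}$ is surjective. Iterating proves surjectivity in every degree $\ge d_0$. Note that this step uses neither level-ness nor any genericity assumption on $L$.

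For (b), I would use downward induction on $d$, leveraging the level hypothesis to rule out stray socle elements. Suppose $\phi_{d+1}$ is injective and $0 \ne a \in \ker \phi_d$; then $La = 0$, so for each $y \in A_1$ we have $\phi_{d+1}(ay) = (La)y = 0$. By the inductive hypothesis this forces $a A_1 = 0$, meaning $a$ is a nonzero socle element in degree $d$. Since $A$ is level, $\soc{A}$ is concentrated in the top degree $e$, and so this is impossible whenever $d < e$. The only boundary case to watch is $d_0 = e$, but there $\phi_{d_0}\colon A_e \to A_{e+1} = 0$ can be injective only if $A_e = 0$, contradicting the definition of $e$ as the socle degree; hence the hypothesis of (b) already forces $d_0 < e$, and the induction proceeds without obstruction. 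Part (c) is then almost formal: when $h_{d_0} = h_{d_0+1}$, the map $\phi_{d_0}$ is injective iff surjective iff an isomorphism, and the WLP is equivalent to maximal rank of every $\phi_d$, which by (a) and (b) is in turn equivalent to $\phi_{d_0}$ being an isomorphism.

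The argument is largely bookkeeping; the one subtle step I expect is the edge case in (b), where one must verify that the hypothetical nonzero socle element lives in a strictly lower degree than $e$, so that the level hypothesis actually produces a contradiction rather than a vacuous statement.
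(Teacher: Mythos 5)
Your argument is correct; note, however, that the paper does not prove this statement at all --- it is quoted from \cite[Proposition 2.1]{MMN} --- so the only comparison available is with the proof there. Your part (a) is the standard one-line computation from $A_{d+1}=A_1\cdot A_d$ and needs no comment. For part (b), the argument in \cite{MMN} runs through duality: levelness says precisely that the canonical module $\omega_A$ is generated in a single degree, so one dualizes $\phi_d$ to a multiplication map on $\omega_A$ and applies the surjectivity statement of part (a) to that cyclic situation. Your direct socle computation ($La=0$ together with injectivity of $\phi_{d+1}$ forces $aA_1=0$, hence $a\in\soc{A}$ in degree $d<e$, which is impossible in a level algebra) is an elementary substitute that avoids duality entirely and, as you observe, uses no genericity of $L$; it is a perfectly valid alternative. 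Two small points. First, your edge-case analysis in (b) is slightly off: the hypothesis does not force $d_0<e$, since for $d_0>e$ the map $\phi_{d_0}\colon 0\to 0$ is vacuously injective, and in that degenerate range the literal statement of (b) fails at $d=e$; one must read the proposition as concerning degrees with $A_{d_0}\neq 0$, which then gives $e\ge d_0+1$, so every degree touched by your downward induction is indeed below $e$. Second, in (c) the forward implication tacitly uses that if $A$ has the WLP then the \emph{general} $L$ is a Lefschetz element (maximal rank in each fixed degree is an open condition on $L\in A_1$, and there are finitely many relevant degrees); this is standard but worth stating, since the WLP only posits the existence of some Lefschetz element.
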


The next statement shows that in our setting, we can focus our attention on a single linear form $L$.

\begin{proposition}[{\cite[Propostion 2.2]{MMN}}] Suppose that $A=S/I$ where $I\subset S$ is an Artinian monomial ideal.  Then, $A$ has WLP if and only if $x_1+\cdots+x_n$ is a Lefschetz element for $A$.
\end{proposition}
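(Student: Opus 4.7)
The plan is to prove this via a Zariski openness argument combined with the torus symmetry enjoyed by monomial ideals. One direction (if $x_1 + \cdots + x_n$ is a Lefschetz element, then $A$ has WLP) is immediate from the definition, so all the work is in the converse.

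First, I would show that the set $U \subseteq \mathbb{A}^n$ of Lefschetz elements is Zariski open. Pick a monomial basis of each $A_d$; since $A$ is Artinian, only finitely many degrees $d$ are nonzero. For $L = a_1 x_1 + \cdots + a_n x_n$, the multiplication map $\phi_d(L): A_{d-1} \to A_d$ has a matrix whose entries are linear in $a_1, \ldots, a_n$. The condition that $\phi_d(L)$ has maximal rank is the non-vanishing of an appropriate minor, which is a Zariski open condition on $(a_1, \ldots, a_n)$. Intersecting these finitely many open conditions over $d$ shows that $U$ is Zariski open in $\mathbb{A}^n$.

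Next, I would exploit the torus action. Because $I$ is generated by monomials, for any $(t_1, \ldots, t_n) \in (\mathbbm{k}^*)^n$ the substitution $x_i \mapsto t_i x_i$ is a graded $\mathbbm{k}$-algebra automorphism of $S$ that preserves $I$, and hence induces a graded automorphism of $A$. Such an automorphism sends a Lefschetz element to a Lefschetz element, so $U$ is invariant under the action $(a_1, \ldots, a_n) \mapsto (t_1 a_1, \ldots, t_n a_n)$ of the torus. Assuming $A$ has WLP, $U$ is nonempty, hence dense in $\mathbb{A}^n$. Density forces $U$ to meet the open set $\{a_i \ne 0 \text{ for all } i\}$, so we can pick $L = a_1 x_1 + \cdots + a_n x_n \in U$ with all $a_i$ nonzero. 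Applying the torus element $(1/a_1, \ldots, 1/a_n)$ carries $L$ to $x_1 + \cdots + x_n$, which therefore lies in $U$ as well.

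The step I expect to be the main (albeit mild) obstacle is checking that the torus action really does preserve Lefschetz-ness, i.e.\ that a graded automorphism $\psi$ of $A$ takes a Lefschetz element to a Lefschetz element. This amounts to observing that if $\phi_d(L)$ has maximal rank, then $\phi_d(\psi(L)) = \psi \circ \phi_d(L) \circ \psi^{-1}$ also has maximal rank, because $\psi$ is a degree-preserving bijection on each graded piece. Once this is in hand, the Zariski openness plus torus invariance immediately yields $(1, \ldots, 1) \in U$, completing the proof.
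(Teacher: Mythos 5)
Your argument is correct and is essentially the standard proof of this result from \cite{MMN} (the paper itself only cites it): Zariski openness of the locus of Lefschetz elements, plus the torus action $x_i \mapsto t_i x_i$ preserving the monomial ideal $I$, lets one rescale a Lefschetz element with all coordinates nonzero to $x_1+\cdots+x_n$. The only hypothesis worth flagging explicitly is that $\mathbbm{k}$ is infinite (as assumed in the paper's setup), which is what guarantees a nonempty Zariski open subset of $\mathbb{A}^n$ contains a $\mathbbm{k}$-point with all coordinates nonzero.
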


\medskip
Let us also recall the basic notation for probabilistic models of monomial ideals. 
Inspired by the study of random graphs and simplicial complexes, \cite{dLPSSW} defines a formal class of probabilistic models of random monomial ideals as follows. 
Fix an integer $D$ and a parameter $p=p(n,D)$, $0\leq p\leq 1$. Construct a random set of monomial ideal generators $B$ by including, independently, with probability $p$ each non-constant monomial of total degree at most $D$ in $n$ variables. The resulting random monomial ideal is simply $I=\ideal{B}$, and if $B=\emptyset$, then we let $I=\ideal{0}$. 
The notation for this data-generating process uses $\mathcal B(n,D,p)$ to denote the resulting distribution on the sets of monomials. This distribution on monomial sets induces a distribution on the set of ideals. This is called the \er-type distribution on monomial ideals and is denoted by $\rmidist(n,D,p)$. 

When an ideal $I$ is generated using  a random process just described, we say the ideal $I$ is an observation of the random variable drawn from the \er-type distribution. We denote this using standard notation for distributions and random variables: $$\rmi\sim\rmidist(n,D,p).$$ We may also refer to  $\rmi$ as an \er\ random monomial ideal. 

It is important to note that the set $\rmidist(n,D,p)$ is a parametric family of probability distributions, one for each set of values of the parameters $n$, $D$, and $p$. 
For the other probabilistic notions, we follow standard random variable notation; for example,  given a (discrete) random variable $Y$, we denote its  probability under the \er\ model as $\prob{Y}$ and its expected value by $\expect{Y}$. Expectation, or mean, is a weighted average computed as follows: $\expect{Y}=\sum_y y\prob{Y=y}$.  An important property  we use repeatedly is that expectation is linear, that is, $\expect{X+cY} = \expect{X}+c\expect{Y}$ for any two random variables $X$ and $Y$ and any constant $c$.  
To compute the expected value of a function of a random variable, we use the following formula: $\expect{g(Y)}=\sum_{y:\prob{Y=y}>0}g(y)\prob{Y=y}$. 
%{\color{magenta} To add: I don't know how much more we have to say here yet. Definition of expectation? fact that it is linear? can't decide.} {\color{green} Perhaps linearity as we use it.} Expectation is linear. Define $E[g(X)]$. }

%%%%%%%%%%%%%%%

%\section{Warm-up simulation results -maybe disperse } \label{section:warmup}
To better understand how we generate samples of random ideal data and how we compute and report various statistics from those samples, let us look at a typical simulation and non-unimodal Hilbert function statistics displayed in Table~\ref{table:ERunimodalStats}. 
\begin{table}[!h]
	\begin{tabular}{  l  | l}
	parameter setting &  	 \\%estimated probability  \\
	$n=5$, $D=100$  &  $\hat\nu$\\ % $h$   non-unimodal \\ 
	\hline
	$p$=0.01 & 0 \\  
	$p$=0.02 & 0.0417  \\
	$p$=0.1&  0 \\ 
	$p$=0.2& 0.01\\
	$0.3\leq p\leq 0.9$ & 0\\
	\end{tabular}
	\vspace{3mm}

\caption{Unimodality of $h$-vectors of  zero-dimensional monomial ideals  drawn  from the \er\ model $\rmidist(5,100,p)$. The column $\hat \nu$ represents the estimated probability $\nu=\prob{h_A\mbox{ is non-unimodal}}$, where $A=S/I$ for $\rmi\sim\rmidist(5,100,p)$.}
 \label{table:ERunimodalStats} 
\end{table}

This table represents repeated simulations of  $N$ randomly generated \er\ monomial ideals.  In this paper, we usually set $N$ to be $100$, and repeated each simulation ten times. 
	For each ideal $I$ in the sample of $N$ ideals, we first compute the Krull dimension of $A=S/I$, and only retain the zero-dimensional ones. Given \cite[Corollary 1.2]{dLPSSW}, one needs to choose $p$ at least $1/D$ to ensure that the Krull dimension is zero with high probability.  
For $p=0.01$ and $p=0.02$ there were $17$ and $48$ zero-dimensional ideals, respectively.  
The table reports the \emph{estimated probability} that $h$ is non-unimodal; this is simply the proportion of the sampled ideals which have a non-unimodal Hilbert function. The proper notation for reporting this statistic---acknowledging that $I$ is random and thereby so is $S/I$---is that the sampled proportion is $\hat \nu$, representing the estimated probability of the event that $S/\rmi$ has a non-unimodal Hilbert function:  $\nu=\prob{h_{S/\rmi}\mbox{ is non-unimodal}}$.  %Since $I$ is random, the coordinate ring $A$ is also random, and so we are computing a probability of a random event.
Thus $\hat\nu$ is a point estimator of $\mathbb P_{n,D,p}(\h_A\mbox{ is unimodal})$, computed under the probability distribution of $\rmi \sim \rmidist(n,D,p)$. 
 When $p=0.02$, the value $\hat\nu=0.0417$ means that $4.17\%$ of the $48$ zero-dimensional ideals in the sample produce a non-unimodal Hilbert function. The value $4.17\%$ is the \emph{average} result of ten repeated simulations of $100$ ideals each. 
In each of the samples we generate, each non-unimodal Hilbert function corresponds to a unique ideal in the sample. 
Based on the simulation reported, we conjecture that the Hilbert function is almost surely unimodal. 

While the table only displays the results for the case of $n=5$ variables and maximum degree $D=100$,  other values produced very similar results: we have tested the cases $n=3,4,5,7$, varying $D=50,100,150,200$. The code used for generating samples relies on the {\tt RandomMonomialIdeals} package in {\tt Macaulay2}, and is made  available on the following page: \[\mbox{\url{https://github.com/Sondzus/RMI-Hilbert-WLP}.}\]
 
In general, throughout this manuscript, the reader may notice that we  often set parameter values  $p\in(0,1)$ to be powers of $1/D$; the reason for this is precisely the Krull dimension threshold result cited in the previous paragraph. 

%This is now about how we report statistics and how we generate data, so that the readers who understand can skip, and those to whom this is new can get more details. :) 
%We have run simulations using functions {\tt randomMonomialIdeals} in Macaulay2 code available at.... TO BE ADDED.  for $n=3,4,5,7$, varying $D=50,100,150,200$. 
%\paragraph{NOTE TO ADD:} the statistics reported actually reflect the  samples of zero-dim ideals, regardless of the pre-set N=100 sample size. This is all behind the ER samples generated..
 % and also varying $D$ so that the expected dimension of the ER monomial ideal is zero ($1/D,2/D,\dots,1$). 
%The expected Hilbert function is \emph{always} unimodal. {\color{magenta}do we want to single out this computational result in another way?} 
%Results of a typical computation of \er\ random monomial ideals that are zero-dimen\-sional are presented in Table~\ref{table:ERunimodalStats}, which only shows the data for $D=100$ and $n=5$. 
%The sample size is set to $N=100$ for each entry in the table. 
%For  low values of $p$, not all of the 100 ideals are zero dimensional, thus they are not counted in the effective sample size. Specifically, 

Perhaps the conjecture that Hilbert functions are almost always unimodal is not surprising. We go one step further and compute the \emph{expected} Hilbert function. Since $h_A(d)$ is a function $\rmi\sim\rmidist(n,D,p)$, for every degree $d$ the quantity $h_A(d)$ is random. So we think of the Hilbert function as an (a priori infinite) random vector. For each degree $d$, we can compute the expectation of $h_A(d)$, which we denote by  $\expect{h(d)}$. The expectation of the entire vector will be denoted by the shorthand $\expect{h}$. \label{expected Hilbert}

% it is a function of a random variable of the random ideal $I\sim\rmidist(n,D,p)$, the function $h_A(d)$ is a function h is a function of I. clarify: E[h] vs average of sample -- so "simulated expectation" 
In principle, the expected Hilbert function can be computed by taking the definition of expectation, where the weighted average is taken over nontrivial monomial ideals. We carry this out in \Cref{sec-Pure O} for one scenario.  In practice, we \emph{estimate} this quantity by sampling ideals from the \er\ model repeatedly and computing the \emph{sample average} of the Hilbert functions from ideals in the sample.  
In {\bf all} of the samples we tested, the expected Hilbert function was unimodal. 
Figure~\ref{fig:ERexpectedH} contains sample plots for this phenomenon, where each curve in the figure is an expected Hilbert function for a sample of size $N=100$ for fixed values of parameters $n,D,p$.
\begin{figure}[!h]
	\vspace{-3mm}
	\includegraphics[scale=.4]{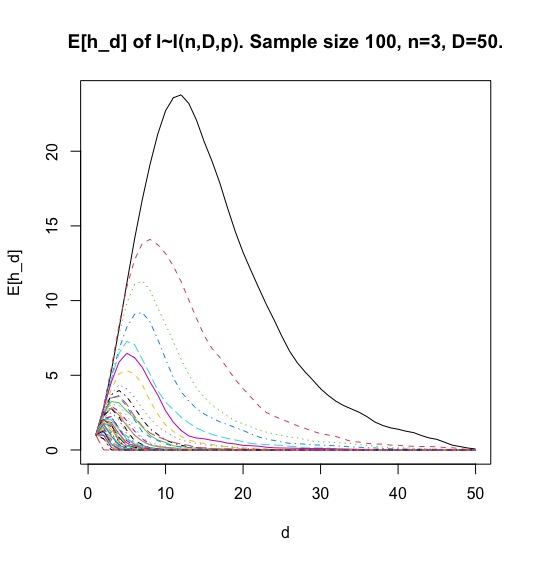}
	\includegraphics[scale=.4]{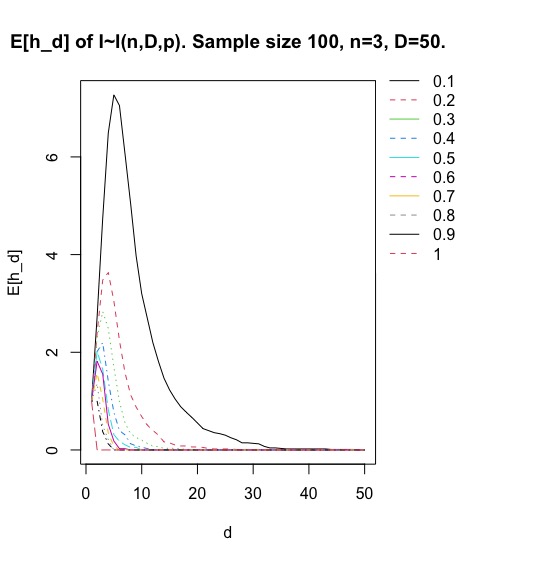}
	
	\vspace{-3mm}
	\includegraphics[scale=.4]{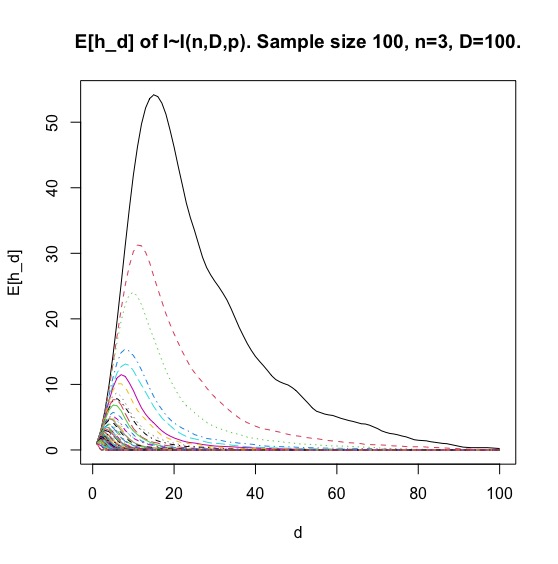}
	\includegraphics[scale=.4]{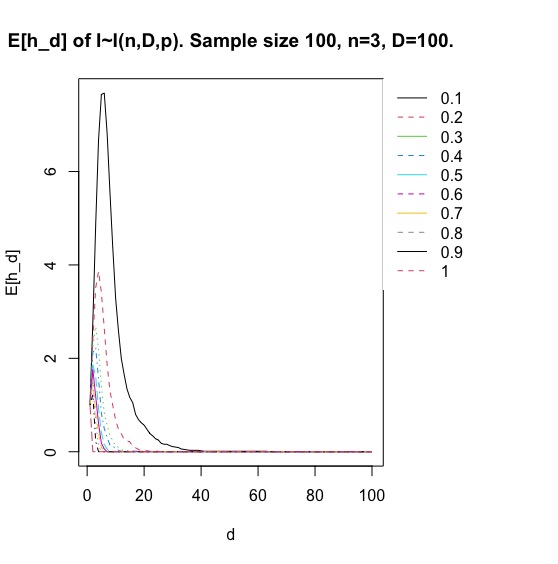}	
	\vspace{-3mm}
\caption{Expected Hilbert functions for $\rmi\sim\rmidist(n,D,p)$.  
%Top n=3 D=50; bottom n=3 D=100. 
Left: $D$ values of $p$ ranging from $1/D$ to $1$. Right: 10 selected values of $p$, with legend. Note that each curve,  of a fixed color, represents one sample of size $N=100$ for one value of $p$.  
%{\color{red}SONJA: I am giving up on this idea:  to contrast this with the formula (prop. \ref{prop:expected Hilb}; then also from thm 7.2 for the other case). Plotting the formula is too much work but I feel that the payoff is too low for that investment right now. Maybe in a revision!}
}
\label{fig:ERexpectedH}
\end{figure}
% {\color{green} Perhaps use $E[h]$ in the captions to avoid confusion with the expected value in a specific degree???}%{\color{magenta}sonja clarify. h is a function of I. clarify: E[h] vs average of sample -- so "simulated expectation" } 

\begin{figure}[!h]
	\vspace{-3mm}
	\includegraphics[scale=.4]{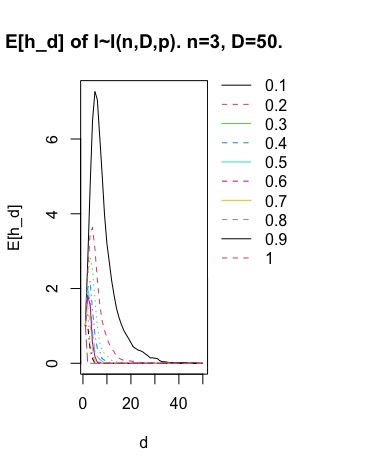}
	\includegraphics[scale=.4]{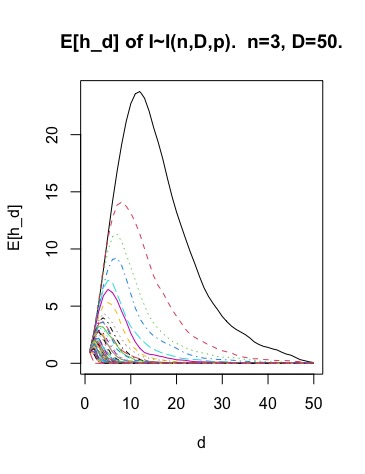}
	\includegraphics[scale=.4]{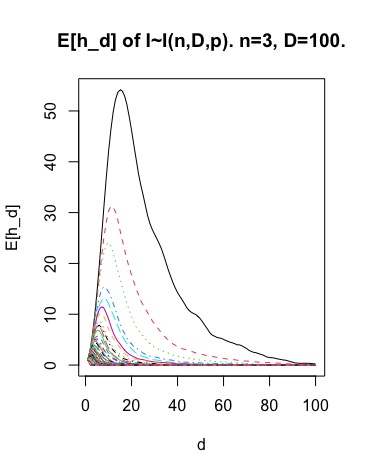}
	
	\vspace{-3mm}
	\includegraphics[scale=.4]{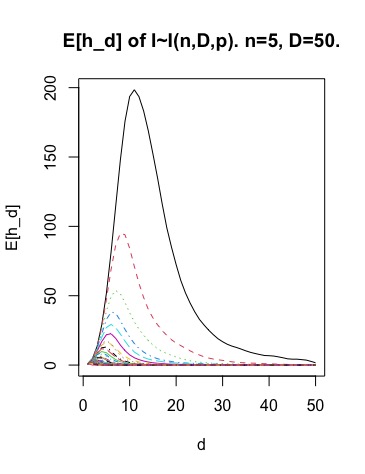}
	\includegraphics[scale=.4]{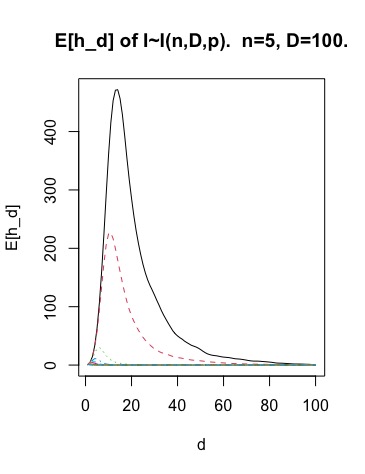}	
	\vspace{-3mm}

	\vspace{-3mm}
\caption{Expected Hilbert functions $h_d(S/\rmi)$ of  monomial ideals $\rmi\sim\rmidist(n,D,p)$.  
%Top n=3 D=50; bottom n=3 D=100. 
Each color curve represents one sample of size $N=100$ for one value of $p$. The top left figure shows the legend for values of the probability parameter $p$. The remaining four figures have about 20 curves so the legends are hidden.}
\label{fig:ERexpectedH}
\end{figure}

%%%%%%%%%%%%%%%

\section{Random standard graded Artinian algebras}
\label{sec:random alg}

We will consider three related regimes for producing random Artinian algebras. In each case we determine the expectation for the value of the Hilbert function in any degree and compare the result with simulations.

Let $S=\mathbbm{k}[x_1,\ldots,x_n]$ with its standard grading and let $J \subset S$ be a homogenous ideal such that $A = S/J$ is Artinian. If $n$ is at most two, then $A$ has the weak Lefschetz property and thus a unimodal Hilbert function (see \cite[Proposition 4.4]{HMNW}). However, if $n \ge 3$ then this is no longer true. 

\begin{example} (i)  The $h$-vector of
\[
A=S/\langle x_1x_2^{13},x_1x_2^{10}x_3,x_1^7x_2^8x_3^2,x_1^7x_3^3,x_1^4x_2x_3^4,x_2^7x_3^8,x_1x_3^{11},x_1^{20},x_2^{20},x_3^{20}\rangle
\] 
is
\begin{align*}
(3, 6, 10, 15, 21, 27, 33, 38, 40, 40, 39, 38, 37, 36, 35, 35,\\
35, 35, 36, 34, 31, 27, 23, 19, 15, 12, 9, 6, 4, 3, 2, 1),
\end{align*} which upon inspection is seen to be not unimodal.  One ``peak" occurs at $\h_A(9)=\h_A(10)=40$ and another occurs at $\h_A(19)=36$.

(ii)
The following ideal gives an example of an $h$-vector with 3 peaks:
%
%              7   6 36   4 6     3 5 2     13 2   6 5   3 6     10 7     7 9   29 18   7 26   2 5 27   2 29     6 35   41   50   50   50                                                                                                                              
%              
%o14 = ideal (x , x x  , x x x , x x x , x x  x , x x , x x , x x  x , x x x , x  x  , x x  , x x x  , x x  , x x x  , x  , x  , x  , x  )                                                                                                                             
%
%              1   1 2    1 2 3   1 2 3   1 2  3   1 3   1 3   1 2  3   1 2 3   2  3    2 3    1 2 3    1 3    1 2 3    3    1    2    3     
%
%
% why is this not coded as a single-line string?! 
\begin{align*}
\langle x_1^7 , x_1^6 x_2^{36}  , x_1^4 x_2^6x_3 , x_1^3 x_2^5 x_3^2 , x_1 x_2^{13} x_3^2 , x_1^6 x_3^5 , x_1^3 x_3^6 , x_1 x_2^{10}  x_3^7 , x_1 x_2^7 x_3^9 , 
\\
x_2^{29}  x_3^{18}  , x_2^7 x_3^{26} 
 x_1^2 x_2^5 x_3^{27}  , x_1^2 x_3^{29}  , x_1 x_2^6 x_3^{36}  , x_3^{41}  , x_1^{50}  , x_2^{50}  , x_3^{50}  \rangle
\end{align*}
has as $h$-vector the following: 

%\begin{verbatim}
%o15 = {
(1, 3, 6, 10, 15, 21, 28, 35, 42, 48, {\bf 52, 52}, 51, 49, 49, 50, {\bf 52, 52}, 50, 47, 45, 45, 46, 47, 48, 49, 50,
%      ------------------------------------------------------------------------------------------------------------
      51, 52, 53, {\bf 54, 54, 54}, 53, 51, 49, 49, 49, 49, 49, 49, 48, 44, 42, 40, 38, 36, 33, 32, 31, 29, 26, 22, 18,
%      ------------------------------------------------------------------------------------------------------------
      14, 12, 11, 10, 9, 8, 7, 6, 5, 4, 3, 2, 1). 
% \end{verbatim}
% \normalsize 
 \end{example}

Let $\rmi\sim \rmidist(n,D,p)$ be any \er-type random monomial ideal. We use it to produce an Artinian algebra in three ways. 

First, consider $A= S/I$, where we have to assume that $p > \frac{1}{D}$ in order to ensure that $A$ has almost surely dimension zero as $D \mapsto \infty$. 
We start by explicitly computing the probability that a monomial is not in the ideal $\rmi$ under the \er\ model.  We set $q = 1 -p$. 

% this result {\color{magenta}and this is not in the RMI paper? -- likely just hidden in a proof and we need it explicitly here--sonja} 
 
 \begin{proposition}
    \label{prop:not in ideal} 
If $\rmi\sim \rmidist(n,D,p)$ then, for every integer $d \ge 0$, one has 
 \begin{align} 
 \prob{x^{a} \not\in\rmi}  & = q^{N-1}, 
 \end{align}
 where 
 \begin{align*}
 N & = \sum_{j= 0}^{D} h_B(j)  = \begin{cases}
 (a_1 +1) \cdots (a_n +1) & \text{ if $|a| \le D$}; \\
 (a_1 +1) \cdots (a_n +1) - \sum_{j= D+1}^{|a|} h_B(j) & \text{ if $|a| \ge D$}, 
 \end{cases}
 \end{align*}
and $h_B$ denotes the Hilbert function of the complete intersection $B = S/\langle x_1^{a_1 + 1},\ldots,x_n^{a_n+1} \rangle $. 
 \end{proposition}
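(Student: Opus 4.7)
The plan is to translate the event $\{x^a \notin \rmi\}$ into a statement about which monomials are drawn into the random generating set $B$, and then exploit the independence built into the $\mathcal B(n,D,p)$ model.

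First, I would observe that since $\rmi = \langle B \rangle$, we have $x^a \in \rmi$ if and only if some monomial of $B$ divides $x^a$. Therefore $\{x^a \notin \rmi\}$ is exactly the event that no \emph{eligible} divisor of $x^a$ is selected for $B$, where ``eligible'' means non-constant and of total degree at most $D$. Because each candidate monomial enters $B$ independently with probability $p$, if $k$ denotes the number of eligible divisors of $x^a$ then $\prob{x^a \notin \rmi} = (1-p)^k = q^k$.

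Next, I would identify $k = N-1$ by counting the divisors of $x^a$. These correspond bijectively to exponent vectors $b$ with $0 \le b_i \le a_i$, which form the standard monomial $\mathbbm{k}$-basis of the complete intersection $B = S/\langle x_1^{a_1+1},\ldots,x_n^{a_n+1}\rangle$. Thus the total number of divisors of $x^a$ equals $(a_1+1)\cdots(a_n+1)$, and within that total the number of divisors of degree exactly $j$ is $h_B(j)$ (which vanishes for $j > |a|$). Excluding the unique constant divisor $1$ contributes the $-1$ in the exponent; when $|a| > D$ one must additionally exclude the divisors of degree $D+1,\ldots,|a|$, yielding the piecewise formula for $N$ given in the statement. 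In either regime, the number of eligible divisors is exactly $N - 1$, giving $\prob{x^a \notin \rmi} = q^{N-1}$.

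I don't anticipate a serious obstacle: the argument is essentially bookkeeping once the divisibility translation and the independence of the Bernoulli selections are in place. The only care needed is to handle both exclusions simultaneously (the constant monomial and the out-of-range degrees), which is why the case split on $|a|$ versus $D$ arises in the definition of $N$, and to note that a unified way to see it is simply $N = \sum_{j=0}^{D} h_B(j)$, counting all divisors of $x^a$ of degree at most $D$.
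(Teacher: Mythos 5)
Your proposal is correct and follows essentially the same route as the paper: both reduce the event $\{x^a \notin \rmi\}$ to the statement that no non-constant divisor of $x^a$ of degree at most $D$ is selected as a generator, use the independence of the Bernoulli selections to get $q^{N-1}$, and count the eligible divisors via the standard monomial basis of the complete intersection $S/\langle x_1^{a_1+1},\ldots,x_n^{a_n+1}\rangle$, whence $N=\sum_{j=0}^{D}h_B(j)$ and the piecewise formula. No gaps.
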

 
%{\bf Notation: $q=1-p$, check we defined it!}
 
 \begin{proof}
% For each monomial $x^{a}\in S$,  let $1_{a}$ be the indicator random variable for the event $x^{a}\not\in\rmi$ where $A=S/\rmi$.  
%Thus, $1_{a} = 1$ 
Note that a monomial $x^{a}$ is not in $\rmi$ if and only if every monomial $x^{b}\in S$ of degree at most $D$ that divides $x^{a}$ is not in $I$. 
Since only non-constant monomials are chosen as generators of $\rmi$,  it follows that 
\[
%\expect{ 1_{a}}  = \prob{1_{a}=1} = 
%\sum_{\substack{a \in \N_0^n \text{ s.t.}\\ |a| \le D }} 
\prob{x^{a} \not\in\rmi}  = q^{N-1}, 
\]
where $N$ is the cardinality of the set $M = \{ x^b \in S \; \mid \; |b| \le D \text{ and $x^b$ divides $x^a$}\}$. Observe that a monomial $x^b$ is in $M$ if and only if it is not in the ideal $J = \langle x_1^{a_1 + 1},\ldots,x_n^{a_n+1} \rangle $ and its degree is at most $D$. This proves $N = \sum_{j= 0}^{D} h_B(j)$. 
The second equality for $N$ follows from the well-known facts that 
\[
\deg J =  (a_1 +1) \cdots (a_n +1) = \sum_{j= 0}^{|a|} h_B(j). 
\]
and $h_B(j) = $ if $j > |a|$. 
 \end{proof}

\begin{remark}
The Hilbert function $h_B$ of a complete intersection $B = S/\langle x_1^{a_1 + 1},\ldots,x_n^{a_n+1} \rangle $ is encoded in the formula for its generating function
\[
\sum_{j= 0}^{|a|} h_B(j) z^j = \prod_{i=1}^n (1 + z + \cdots + z^{a_i}).  
\]
\end{remark}

The above result has the following consequence for the Hilbert function, where we use the convention that a sum 
$\sum_{j = a}^e f(j)$ is defined to be zero if $a > e$.  
 
 \begin{proposition}
   \label{prop:expected Hilb}
 If $\rmi \sim \rmidist(n,D,p)$ then the expectation of $\dim_K [S/\rmi]_d$ with $d \ge 1$ is 
 \[
 \expect{h_d}  = q^{-1} \cdot  \sum_{\substack{a \in \N_0^n \text{ s.t.}\\ |a|=d}} 
q^{ (a_1 +1) \cdots (a_n +1) - \sum_{j= D+1}^{|a|} h_{B_a} (j)}, 
%  q^d\sum_{\substack{(a_1,a_2,a_3)\text{ s.t.}\\a_1+a_2+a_3=d}}q^{a_1a_2a_3+a_1a_2+a_1a_3+a_2a_3}. 
\]
where  $B_a = S/\langle x_1^{a_1 + 1},\ldots,x_n^{a_n+1} \rangle $. 
 \end{proposition}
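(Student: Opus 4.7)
The plan is to write $h_d(S/\rmi)$ as a sum of indicator random variables, one per monomial of degree $d$, and then to apply linearity of expectation together with Proposition~\ref{prop:not in ideal}.

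First I would observe that for any fixed monomial ideal $I \subseteq S$, the images of the monomials of degree $d$ that do not lie in $I$ form a $\mathbbm{k}$-basis of $[S/I]_d$. Consequently, for the random ideal $\rmi$,
\[
h_d(S/\rmi) \;=\; \sum_{\substack{a \in \N_0^n \\ |a| = d}} \mathbbm{1}_{\{x^a \notin \rmi\}}.
\]
Taking expectations and invoking linearity then yields
\[
\expect{h_d} \;=\; \sum_{\substack{a \in \N_0^n \\ |a| = d}} \prob{x^a \notin \rmi}.
\]

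Next I would plug in the formula from Proposition~\ref{prop:not in ideal}, which says that $\prob{x^a \notin \rmi} = q^{N-1}$ where
\[
N \;=\; (a_1+1)\cdots(a_n+1) \;-\; \sum_{j=D+1}^{|a|} h_{B_a}(j),
\]
with the convention that the sum is empty (hence zero) when $|a| \le D$. Pulling the common factor $q^{-1}$ out of the sum then produces exactly the stated expression for $\expect{h_d}$.

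The argument is essentially a bookkeeping step on top of Proposition~\ref{prop:not in ideal}, so there is no serious obstacle; the genuine work was carried out in that earlier result. The only mild points worth verifying explicitly are that the hypothesis $d \ge 1$ rules out the degenerate case $a = 0$ (where $x^0 = 1 \notin \rmi$ automatically, since only non-constant monomials are eligible generators), and that the convention on the empty sum correctly collapses the exponent to $(a_1+1)\cdots(a_n+1)$ when $|a| \le D$ while retaining the correction term when $|a| > D$.
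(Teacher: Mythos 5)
Your argument is exactly the paper's proof: express $h_d$ as a sum of indicator variables $\mathbbm{1}_{\{x^a \notin \rmi\}}$ over degree-$d$ monomials, apply linearity of expectation, and substitute the formula from Proposition~\ref{prop:not in ideal}. Your additional remarks about the empty-sum convention and the exclusion of $a=0$ are correct and only make explicit what the paper leaves implicit.
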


 \begin{proof} 
For each monomial $x^{a}\in S$,  let $1_{a}$ be the indicator random variable for the event $x^{a}\not\in\rmi$ where $A=S/\rmi$.  Since $h_d=\sum_{x^{\alpha}\in S_d} 1_{\alpha}$,  linearity of expectation and \Cref{prop:not in ideal} implies our assertion. 
\end{proof}

Second, consider the Artinian standard graded algebra  
\[
A =  S/(\rmi+\langle x_1^{D+1},\ldots,x_n^{D+1}\rangle).
 \]  
In his case, one has to adjust the above formula as follows. 

%\[
%A = S/(\rmi+\langle x_1^{D+1},\ldots,x_n^{D+1}\rangle)=\bigoplus_{i=1}^r A_i.
%\] 
% 
%  
%The above formula becomes simpler if one considers the $h$-vectors of the following monomial algebras: 
  
\begin{corollary} 
\label{cor:add powers}
Let $J = \langle x_1^{D+1},\ldots,x_n^{D+1}\rangle$. 
  If $\rmi \sim \rmidist(n,D,p)$ then the expectation of $\dim_K [S/(\rmi + J)]_d$  is 
  \[
 \expect{h_d}  = q^{-1} \cdot  \sum_{\substack{x^a \in [S]_d \setminus J} } % \text{ s.t.}\\ |a|=d}} 
q^{ (a_1 +1) \cdots (a_n +1) - \sum_{j= D+1}^{|a|} h_{B_a} (j)}, 
\]
where  $B_a = S/\langle x_1^{a_1 + 1},\ldots,x_n^{a_n+1} \rangle $.
%, $C = S/\langle x_1^{D+1},\ldots,x_n^{D+1})\rangle$ and 
%\[
%t = \max \Big \{0,\ \binom{n + |a|}{n} - \binom{n + D}{n} \Big \} - 1 -  \sum_{j = D+1}^{|a|} h_C (j). 
%\]
 \end{corollary}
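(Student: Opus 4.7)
The plan is to mimic the indicator-variable argument used in the proof of \Cref{prop:expected Hilb}, adjusting only for the fact that the ideal now also contains the fixed generators $x_1^{D+1}, \ldots, x_n^{D+1}$.

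First, I would record the elementary fact that for any two monomial ideals $I_1, I_2 \subset S$, a monomial $m \in S$ lies in $I_1 + I_2$ if and only if it lies in $I_1$ or in $I_2$; this is because containment of a monomial in a monomial ideal is equivalent to divisibility by one of its generators. Applied to $\rmi$ and $J$, this yields $x^a \notin \rmi + J$ if and only if $x^a \notin \rmi$ and $x^a \notin J$. In particular, if $x^a \in J$ then $\prob{x^a \notin \rmi + J} = 0$, while if $x^a \notin J$ then $\prob{x^a \notin \rmi + J} = \prob{x^a \notin \rmi}$, since $J$ is deterministic.

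Next, for each monomial $x^a \in S$ let $1_a$ be the indicator random variable of the event $x^a \notin \rmi + J$, and write $h_d = \sum_{x^a \in [S]_d} 1_a$ for $h_d = \dim_{\mathbbm{k}} [S/(\rmi+J)]_d$. Linearity of expectation then gives
\[
\expect{h_d} = \sum_{x^a \in [S]_d} \prob{x^a \notin \rmi + J} = \sum_{x^a \in [S]_d \setminus J} \prob{x^a \notin \rmi}.
\]
Inserting the value $\prob{x^a \notin \rmi} = q^{N-1}$ supplied by \Cref{prop:not in ideal}, with $N = (a_1+1)\cdots(a_n+1) - \sum_{j=D+1}^{|a|} h_{B_a}(j)$ (for $x^a \notin J$ both cases of that proposition agree, since the truncation sum is empty whenever $|a| \le D$), and factoring out $q^{-1}$ yields the claimed identity. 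There is essentially no obstacle in this argument; the only place that needs care is the opening monomial-ideal observation that lets us convert the ``or'' event into a restriction of the sum to $[S]_d \setminus J$.
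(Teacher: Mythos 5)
Your argument is correct and follows essentially the same route as the paper's proof: reduce to the observation that for $x^a \notin J$ one has $x^a \notin \rmi + J$ if and only if $x^a \notin \rmi$, then apply \Cref{prop:not in ideal} and linearity of expectation over the indicators of monomials in $[S]_d \setminus J$. Your added justification that membership in a sum of monomial ideals is an ``or'' of memberships, and the remark that the two cases of \Cref{prop:not in ideal} coincide via an empty truncation sum, are correct fillings-in of details the paper leaves implicit.
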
 
 
 \begin{proof}
 We argue similarly as above. 
Note that  $\dim_K [S/(\rmi + J)]_d$ is the number of degree $d$ monomials $x^a \in S$ that are not in $I + J$. If $x^a$ is not in $J$, then $x^a \notin I+J$ if and only if $x^a \notin I$. The probability of the latter event is given by \Cref{prop:not in ideal}. Hence the claim follows by linearity of expectaction.  
 \end{proof}

Observe that in particular one has $ \expect{h_d} = 0$ if $d > nD$ because then $[J]_d = [S]_d$. 
 
A third option  is to consider 
\[
 A=S/(\rmi+\langle x_1,\ldots,x_n\rangle ^{D+1}).
 \] 
In this case, one has $[A]_d = 0$ if $d > D$. Thus, it is enough to consider degrees $d \le D$, and \Cref{prop:expected Hilb} gives. 
 
 \begin{corollary} 
   \label{cor:add power max ideal}
  If $\rmi \sim \rmidist(n,D,p)$ then the expectation of $\dim_K [S/(\rmi + \langle x_1,\ldots,x_n\rangle ^{D+1}]_d$ with $1<d \le D$ is 
  \[
 \expect{h_d}  = q^{-1} \cdot  \sum_{\substack{a \in \N_0^n \text{ s.t.}\\ |a|=d}} 
q^{ (a_1 +1) \cdots (a_n +1)}. 
\]
 \end{corollary}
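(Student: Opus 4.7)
The plan is to deduce the corollary directly from \Cref{prop:expected Hilb} by exploiting two simplifications that occur in the range $1 < d \le D$.

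First I would observe that the additional ideal $\langle x_1,\ldots,x_n\rangle^{D+1}$ contributes nothing in degrees $d \le D$: since every monomial generator of $\langle x_1,\ldots,x_n\rangle^{D+1}$ has degree $D+1$, we have $[\langle x_1,\ldots,x_n\rangle^{D+1}]_d = 0$ for all $d \le D$. Consequently
\[
[S/(\rmi + \langle x_1,\ldots,x_n\rangle^{D+1})]_d = [S/\rmi]_d
\]
as $\mathbbm{k}$-vector spaces whenever $d \le D$. In particular, the random variable $h_d$ in this corollary coincides with the random variable $h_d$ of \Cref{prop:expected Hilb} in this range, so their expectations agree.

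Next I would apply \Cref{prop:expected Hilb} and simplify the exponent. For a monomial $x^a$ of degree $d$, the exponent in the formula is
\[
(a_1+1)\cdots(a_n+1) - \sum_{j=D+1}^{|a|} h_{B_a}(j).
\]
Since $|a| = d \le D$, the sum $\sum_{j=D+1}^{d} h_{B_a}(j)$ is empty and therefore equals zero by the summation convention adopted just before \Cref{prop:expected Hilb}. This reduces the exponent to $(a_1+1)\cdots(a_n+1)$ and yields exactly the claimed formula
\[
\expect{h_d} = q^{-1} \cdot \sum_{\substack{a \in \N_0^n \text{ s.t.}\\ |a|=d}} q^{(a_1+1)\cdots(a_n+1)}.
\]

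There is no real obstacle here: the result is a clean corollary of \Cref{prop:expected Hilb}, and the only things to check are the two observations above, namely that the added ideal is inert in low degrees and that the correction term in the exponent vanishes when $|a| \le D$. One might note in passing that the same argument works verbatim for $d = 1$, and also that the formula correctly gives $\expect{h_d} = 0$ for $d > D$ since $[A]_d = 0$ in that range, although this falls outside the stated regime of the corollary.
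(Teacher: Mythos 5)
Your proposal is correct and follows exactly the paper's (very brief) argument: the paper notes that $[A]_d$ only needs to be considered for $d \le D$, where the ideal $\langle x_1,\ldots,x_n\rangle^{D+1}$ contributes nothing, and then invokes \Cref{prop:expected Hilb}, whose correction term $\sum_{j=D+1}^{|a|} h_{B_a}(j)$ is empty when $|a| = d \le D$. Your two observations are precisely the content of the paper's one-line deduction, just spelled out more carefully.
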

 
% \begin{problem} Find ways to understand/estimate the above sum .
% \end{problem}
 
  \begin{example} Let $n=3$ and consider the principal ideal $I=\langle x_1^{a_1}x_2^{a_2}x_3^{a_3}\rangle$ where $a_1,a_2,a_3>0$ and $a_1+a_2+a_3=d$. Set $A=S/(I+\langle x_1,x_2,x_3\rangle ^D)$ where $D\gg d$.  Then the $h$-vector of $A$ is has the form
 \[
(1,3,\ldots,\binom{d-1}{2},\binom{d+2}{2}-1,\binom{d+3}{2}-3,\ldots,\binom{d+j+2}{2}-\binom{j+2}{2},\ldots,\binom{D+1}{2}-\binom{D-d+1}{2}),
 \] 
 which is easily seen to be strictly increasing.
 \end{example}
 
 \begin{question} Does there exist an integer $D$ and choice of probability parameter $p$ so that the expected $h$-vector
 \[
 \expect{\h_A}=(1,\expect{h_1},\ldots,\expect{h_{D-1}})
 \] is \emph{not} unimodal in one of the three regimes above?
 \end{question}
 
% {\color{magenta}
%  for the model  
% \(
% S/(\rmi+\langle x_1^{D+1},\ldots,x_n^{D+1}\rangle).
% \)
% THE SIMULATIONS indicate the answer seems to be no. !    see section 5.1 and tables.
%} 
 Simulation results  for $n=3,4,5$ suggest that the answer to this question is a firm `no'.  It is an open problem to prove this. 
 
 %%%%%%%%%%%%%%%%%%%%%%%%%%%
 
%{\color{magenta} Moved from the very end of the paper and shortened - okay?}

There are other regimes for producing random algebras. For example, one can consider algebras $S/I$, where the generators of $I$ are sampled from the monomials in $S$ of fixed degree $D$.  Results in \cite{AB} are relevant in this case.  We leave the investigation of this and further regimes for future work.  

%%%%%%

\subsection{Simulation results}

%See the paragraph we wrote about Macaulay2 code available at...TBD. 
%Two more functions for the artinian setting: {\tt randomArtinAlgebras},  and {\tt randomArtinPlusAlgebras} 

Table \ref{table:RandomArtinUnimodalStats} shows statistics from a typical simulation for random Artinian algebras with $I+(x_1^D,\dots,x_n^D)$ for $\rmi \sim \rmidist(n,D,p)$. 
The tables report the estimated probability $\hat\nu$ that $h$ is non-unimodal. In other words, $\hat\nu$ is a point estimator of $\mathbb P_{n,D,p}(\h_A\mbox{ is unimodal})$, computed under the probability distribution of $I+(x_1^D,\dots,x_n^D)$ with $\rmi \sim \rmidist(n,D,p)$. 
In each table, we fix parameters $n$ and $D$ and the sample size $N$, and we vary the probability parameter $p$.   We then repeat the experiment of sampling $N$ random artin algebras ten times over, and report the average over those ten experiments, each with sample size $N$. There was very little variation between the repetitions, suggesting that the chosen $N$ suffices to capture large-sample properties of the random algebras. 

Given the threshold results in \cite[Corollary 1.2]{dLPSSW}, choosing the probability parameter $p$ to be $1/D^i$ for $i=1\dots n$ varies the expected Krull dimension of the random ideal $\rmi$. This is how the specific values of $p$ were chosen in the simulation. 
Perhaps the most interesting phenomenon we noticed is that around the value $p=1/D^2$ and $1/D^3$, there seems to be an increase in the number of non-unimodal $h$-vectors in the sample of random algebras.

\begin{table}[h]

In the two tables below, $\hat \nu$ represents the estimated probability $\nu=\prob{h_A\mbox{ is non-unimodal}}$. 

\medskip
 
	\begin{tabular}{  l | l}
		parameter setting  & \\% estimated probability  \\ 
		$n=3$, $D=100$ &  $\hat\nu$\\ %	  $h$   non-unimodal \\ 
	\hline
	 $p= 0.000001$&  0 \\ 
	 $p= 0.0001$ & 0.26  \\
	$p= 0.01$ & 0.6 \\ %6 out of 100 ! in repeated simulations (repeated them 5 times).   
	 $p= 0.1$ &  0.1 \\ 
	 $0.2\leq p\leq 0.9 $& 0 \\ 
	\end{tabular}
%
%	\vspace{5mm}
%
	\qquad 
	\begin{tabular}{  l | l}
	parameter setting &  	 \\%estimated probability  \\
	$n=4$, $D=50$  &  $\hat\nu$\\ % $h$   non-unimodal \\ 
	\hline
	$p=1/D^3=0.000008$ & 0 \\
	$p=1/D^2= $0.0004 & 0.26  \\
%	$p= $0.001 & 0.11 \\ %?TBD  \\ 	
	$p= 1/D=0.02$ & 0.06 \\  
	 $0.1\leq p\leq 0.9 $&  0 \\ 
	\end{tabular}
	\qquad 
	\begin{tabular}{  l | l}
%	parameter setting&  	 estimated probability  \\
%	 $n=5$, $D=50$  &  $h$   non-unimodal \\ 
	parameter setting&  	 \\
	 $n=5$, $D=50$  &  $\hat\nu$\\ 
	\hline
	$p=0.000008$ & 0 \\
	$p= 0.0004$ & 0.2 \\ %Number of non-unimodal h-vectors in the sample: 2 -- for D50 n5 p.0004 -- this is N=100.
	$p= 0.001$ & 0.4 \\ 	%Number of non-unimodal h-vectors in the sample: 2 -- for D50 n5 p.001
	$p=0.02$ & 0.01 \\
	$p= 0.1$ & 0.01\\
	$0.2\leq p\leq 0.9 $ & 0\\
	\end{tabular}

	\vspace{3mm}

\caption{The estimated probability $\hat\nu$ that $h_A$ is non-unimodal, for samples of random  artin algebras  $\rmi+(x_1^D,\dots,x_n^D)$, where $\rmi \sim \rmidist(n,D,p)$. 
Sample size for each table entry is $N=100$ for $n=3$ variables, and $N=50$ for $4$ and $5$ variables for computational reasons. %, because it takes much longer to check the WLP for more than $3$ variables. 
Each value in the table is the average proportion for the given sample size obtained from \emph{ten repeated simulations} for the fixed values of parameters $n,D,p,N$. 
}
 \label{table:RandomArtinUnimodalStats} 
\end{table}

Table \ref{table:RandomArtinPlusUnimodalStats} shows statistics from a typical simulation for random ER monomial ideal with powers of variables added, or power of maximal ideal added. These are random Artinian algebras with $\rmi+(x_1,\dots,x_n)^D$ for $\rmi \sim \rmidist(n,D,p)$.  
\begin{table}[!h]
In the tables below, $\hat \nu$ represents the estimated probability $\nu=\prob{h_A\mbox{ is non-unimodal}}$. 

\medskip
 
	\begin{tabular}{  l| l}
	$n=3$, $D=50$ &  \\
	parameter setting &  $\hat\nu$\\ %est.prob. non-unimodal \\ 
	\hline
	 $p=$0.000008&  0 \\ 
	 $p=$0.0004 & 0.1  \\ % 5 out of 50, or 1/50, or 6/50, 2/50.  take average. 
	 $p=$0.02 & 0.04 \\  
	 $p=$0.01 &  0.16 \\   %7/50 or 9/50 etc. 
	 $p=$0.1& 0.1\\ %sometimes 1 and sometimes 0 out of 50  but more often 0?  so report "<0.1"? naah.
	 $p= $0.2,\dots,0.9&0\\
	 \end{tabular}
\qquad
	\begin{tabular}{  l | l}
	$n=3$, $D=100$&  \\
	parameter setting &  $\hat\nu$\\ %est.prob. non-unimodal \\ 
	\hline
	 $p=$0.000001&  0 \\ 
	 $p=$0.0001 & 0.24  \\
	 $p=$0.001& 0.4 \\ %6 out of 50 on avg.  D=100
	 $p=$0.01 & 0.12 \\  
%Number of non-unimodal h-vectors in the sample: 0 -- for D50 n3 p.05
%Number of non-unimodal h-vectors in the sample: 2 -- for D50 n3 p.05 for N=100 so this would be uhat = 0.2 ! 
	 $p=$0.1&0.4 \\ % 2 out of 50 on averageD=100
	 $p= $0.2,\dots,0.9&0\\%D=100
	\end{tabular}
%
%	\vspace{5mm}
%
\qquad
	\begin{tabular}{  l  |l}
	$n=4$, $D=50$ &  \\
	parameter setting &  $\hat\nu$\\ %est.prob. non-unimodal \\ 
	\hline
	$p=$0.0004& 0.16 \\ %8/50
	$p=$0.001&  0.1* \\  %Number of non-unimodal h-vectors in the sample: 1 -- for D50 n4 p.001 -- changed N=10 for computational ease
	$p=$0.02& 0.4 \\ 
	$p=$0.1& 0.4\\ 
	$p=$0.2& 0\\ 
	$p=$0.3& 0.2\\ 
	$p=$0.14,$\dots$,0.9& 0\\ 
	\end{tabular}
		\vspace{3mm}

	\caption{The estimated probability $\hat\nu$ that $h_A$ is non-unimodal, for samples of random  artin algebras  $I+(x_1,\dots,x_n)^D$, where $\rmi \sim \rmidist(n,D,p)$. 
Each value in the table is the average proportion for the given sample size obtained from \emph{five to ten repeated simulations} for the fixed values of parameters $n,D,p,N$. The sample size $N$ was set to $100$, except in the entries marked by $^*$ where the sample size was $20$ or $50$ in repeated simulations.  
}
 \label{table:RandomArtinPlusUnimodalStats} 
\end{table}

%{\color{magenta}sonja's note:}  after the next revision of this draft, I may consider doing more simulations to get more data with larger sample sizes. The computation time is a real problem and Macaulay2 gets tired of storing all these ideals fairly quickly. ...  

\subsection{Expected Hilbert functions}

In \Cref{prop:expected Hilb} and Corollaries \ref{cor:add powers}, \ref{cor:add power max ideal}, we determined the expected $h$-vector, $\expect{\h_A}$, of the random algebras considered above. Evaluating these formulas, it turned out that the expected $h$-vector was \emph{always unimodal}, for all values of $n,D,p,N$ we have tried.  Various figures in this section illustrate this phenomenon. In particular, this includes all the cases corresponding to Tables \ref{table:RandomArtinUnimodalStats} and \ref{table:RandomArtinPlusUnimodalStats}. 
%{\color{magenta} Thanks for checking! Did you also try to evaluate the formula in the first case, i.e., using \Cref{prop:expected Hilb}? Again always unimodal???}  {\color{cyan}Well, I did not evaluate the formula in the first case, sorry.} 
%\\ {\color{magenta} 
%Other issue: I replaced ``artin algebra" by ``Artinian algebra" everywhere except at the headings of individual tables as in Figure 3. Where would one make that change? - The same issues arises elsewhere. ??? } {\color{cyan}In R. I did it. Remove this and remaining comments once you approve of the changes!:)} 

\begin{figure}[!h]
Random Artinian  algebras of the form $\rmi+(x_1^D,\dots,x_n^D)$ for  $n=3, D=50$: 

	\includegraphics[scale=.4]{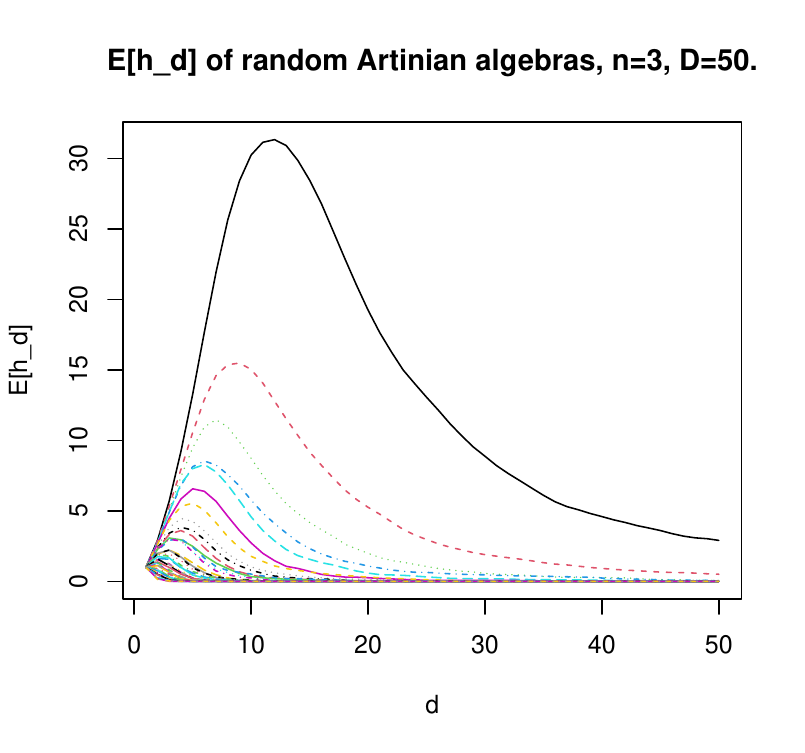}
	\includegraphics[scale=.4]{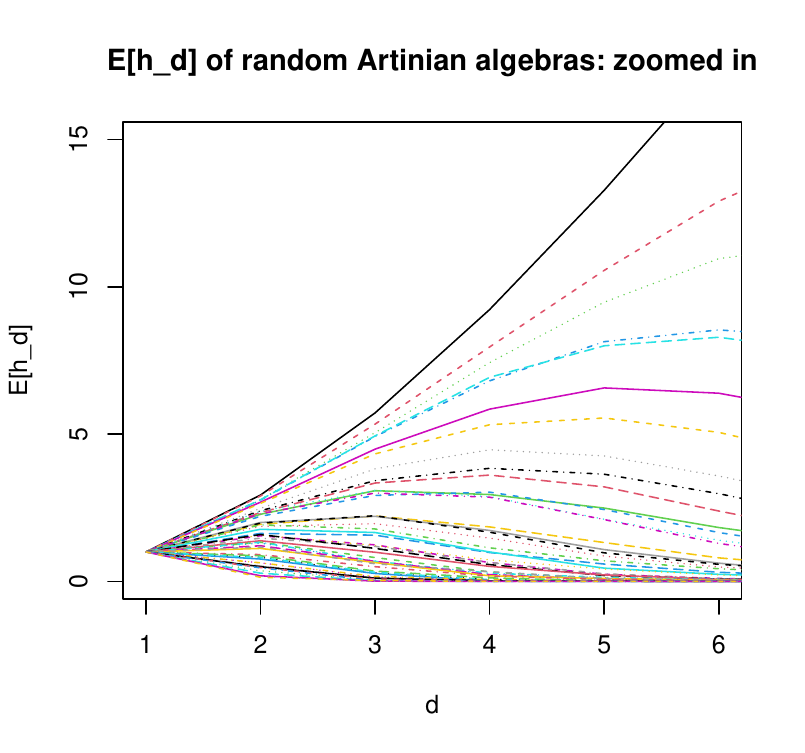}
	\includegraphics[scale=.4]{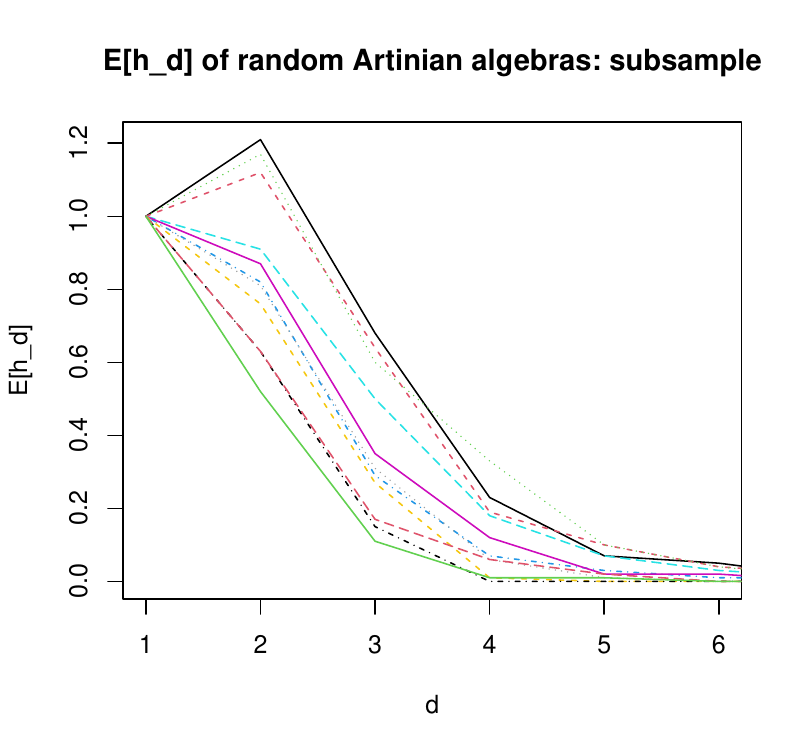}
\caption{Expected Hilbert functions Artinian algebras $\rmi+(x_1^D,\dots,x_n^D)$, where $\rmi \sim \rmidist(n,D,p)$.  Since the figure on the left gives a global trend, the middle and right figure offer a zoomed-in view of the sample (note the truncated $y$-axes in the middle and right figures). 
}
\label{fig:expectedH random artin algebras}
\end{figure}

\begin{figure}[!h]
Random Artinian  algebras of the form $\rmi+(x_1^D,\dots,x_n^D)$ for  $n=3, D=100$: 

	\includegraphics[scale=.4]{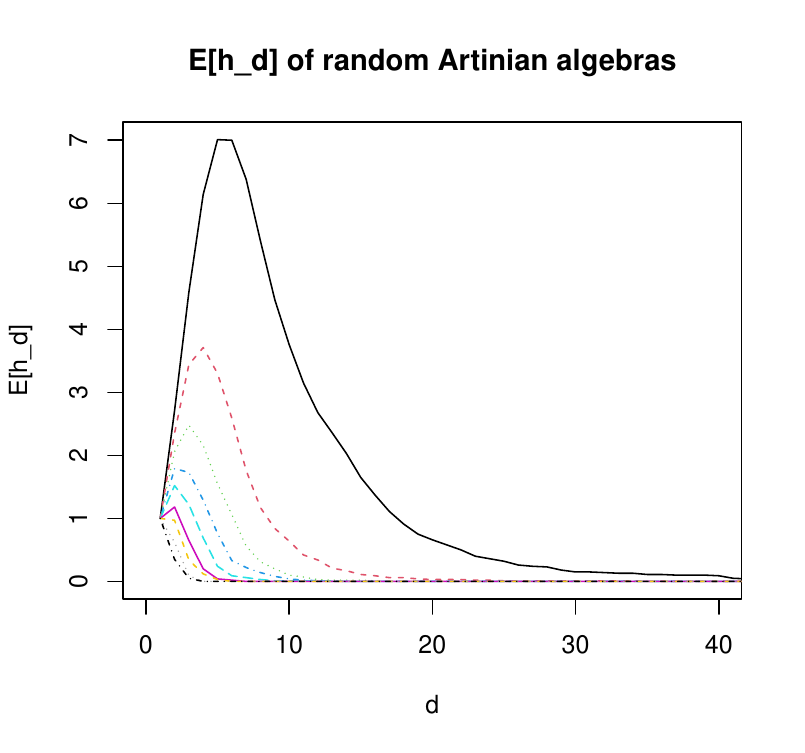}
	\includegraphics[scale=.4]{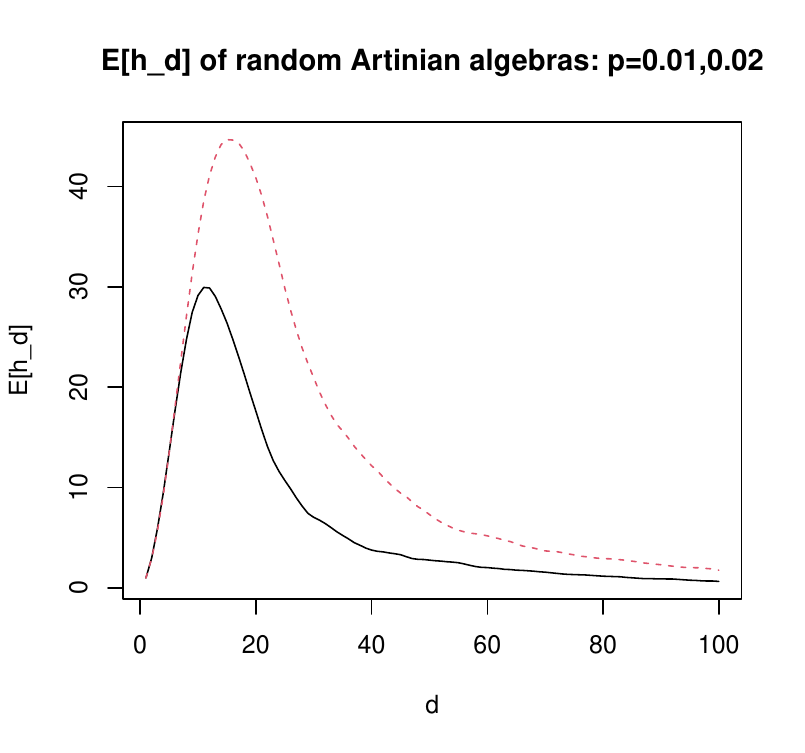}

\caption{Expected Hilbert functions for select random Artinian  algebras $\rmi+(x_1^D,\dots,x_n^D)$, where $\rmi \sim \rmidist(n,D,p)$, obtained from  the same samples of monomial algebras from Table~\ref{table:RandomArtinUnimodalStats}. The figure on the right represents 50 algebras generated using the values $p=0.01$ and $p=0.02$, while the figure on the left values $p>0.1$.    %{\color{magenta} Should it be $h_d$ instead of $E[h_d]$ in the above table?}{\color{cyan}No, it is the expected h.} {\color{magenta} Perhaps specify the choices of $p$ instead of the vaguer ``two types"??? }{\color{cyan}You are right. I looked back, could not find what the values were with certainty, so had to recreate some plots.}
}
\label{fig:expectedH random artin algebras - 2}
\end{figure}

\begin{figure}[!h]
Random Artinian algebras of the form $\rmi+(x_1^D,\dots,x_n^D)$ for  $n=4, D=50$: 

	\includegraphics[scale=.4]{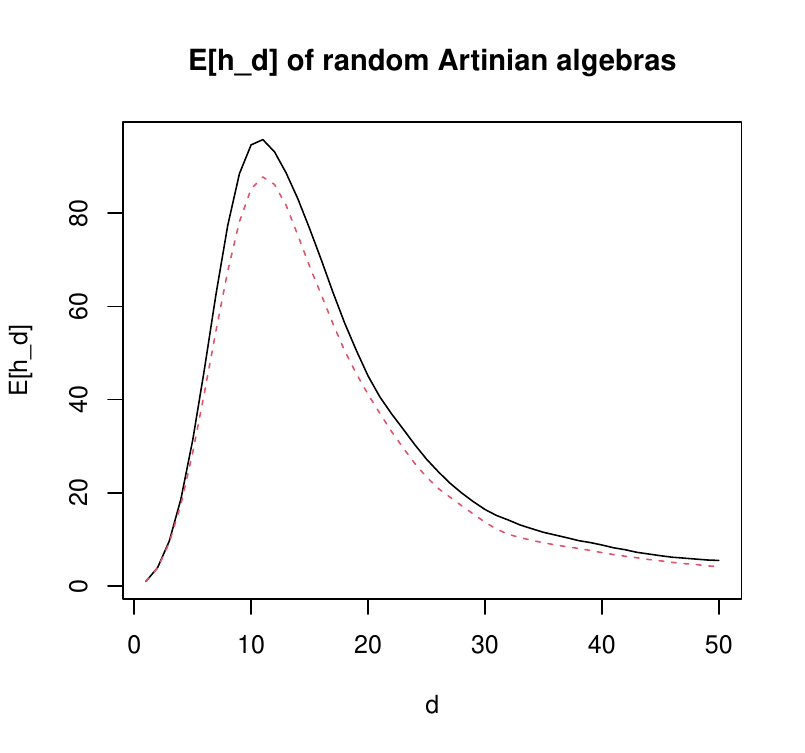}
	\includegraphics[scale=.4]{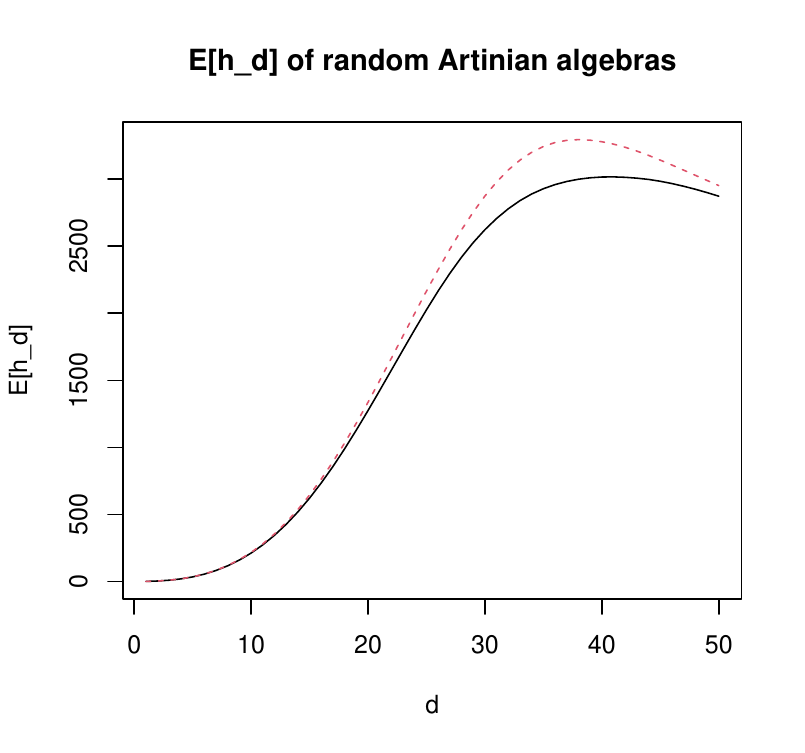}

\caption{Expected Hilbert functions for select random Artinian  algebras $\rmi+(x_1^D,\dots,x_n^D)$, where $\rmi \sim \rmidist(n,D,p)$.  The figure on the left represents algebras generated using the value of $p$ approximately $1/D$, while on the right $1/D^2$ and $1/D^3$. The samples  are selected  from the same samples of monomial algebras from Table~\ref{table:RandomArtinUnimodalStats}. 
}
\label{fig:expectedH random artin algebras - 3}
\end{figure}

\begin{figure}[!h]
Random Artinian  algebras of the form $\rmi+(x_1,\dots,x_n)^D$ for  $n=4, D=50$: 

	\includegraphics[scale=.4]{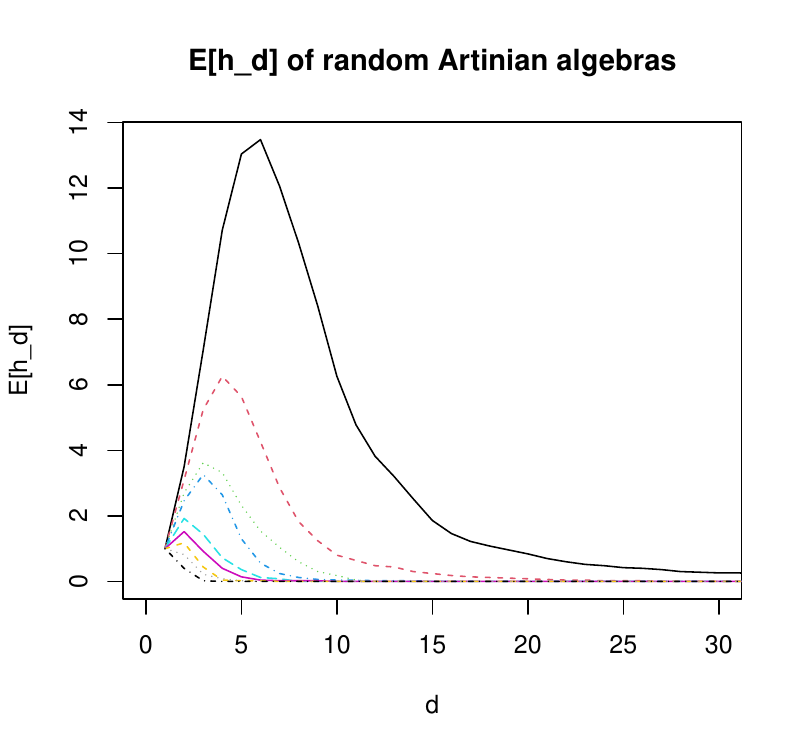}
	\includegraphics[scale=.4]{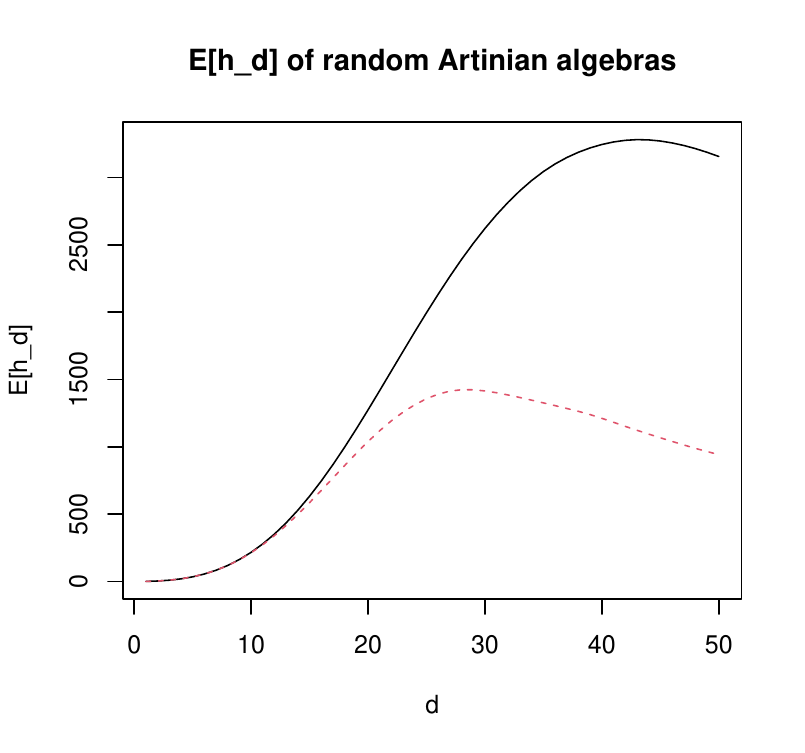}

\caption{Expected Hilbert functions random Artinian algebras $\rmi+(x_1,\dots,x_n)^D$, where $\rmi \sim \rmidist(n,D,p)$.  The figure on the right represents algebras generated using the value of $p$ approximately $1/D$, while on the left the values $p>0.1$. The samples  are selected from the same samples of monomial algebras from Table~\ref{table:RandomArtinPlusUnimodalStats}. 
}
\label{fig:expectedH random artin plus algebras - 1}
\end{figure}

%%%%%%%%%%%

\subsection{WLP simulations} 

%Table~\ref{table:RandomERwlpStats} shows  statistics on WLP for monomial ideals drawn from $\rmidist(n,D,p)$. 
Using the Krull dimension threshold table in  \cite[Figure 1]{dLPSSW}, the parameter $p$ should be set to $p>1/D$ to obtain zero-dimensional ideals with high probability. 
For $n=3,4,5$ variables, when $p=1/D$, we found that %{\color{magenta}what are these 30-60 numbers? check.} 
approximately 30-60\% of sampled zero-dimensional ideals had the WLP, and the rest did not (the failure was more common for smaller values of $p$, such as $p=0.01$ in the case $D=100$). 
This was true for various values of $n$ and $D$ in our simulations. 
As $p=2/D,3/D,...$  moved closer to $1$, the failure of the WLP became a very rare event, as expected. 

Next, we repeated the simulations for Artinian algebras similar to unimodality simulations above. 
Sampled monomial algebras reported in Figures \ref{table:RandomArtinWLPStatsFailure} and \ref{table:RandomArtinPlusWLPStatsFailure} are exactly the same samples of ideals used to compute statistics on unimodality of Hilbert functions  reported in Tables \ref{table:RandomArtinUnimodalStats}  and \ref{table:RandomArtinPlusUnimodalStats}, respectively. 
%Results are summarized in Table \ref{table:RandomArtinWLPStats}. 
%%\begin{table}[!h]
%%	\begin{tabular}{  l l}
%%	$n=3$, $D=50$ &  \\
%%	parameter setting & $\hat{w}$ \\% proportion with WLP \\ 
%%	\hline
%%	\end{tabular}
%%	\caption{Estimated proportion $\hat{w}$ of random  artin algebras $A=S/(I+(x_1^D,\dots,x_n^D))$ for which WLP holds, where $\rmi \sim \rmidist(n,D,p)$.
%%	% .... Sample size for each table entry is $N=100$. This is a typical output from the many simulations we did for $n=3,4,5$ and $D=50,100$. 
%%	Similar rates of (non)failure of WLP hold for the model $I+(x_1,\dots,x_n)^D$ .} 
%% \label{table:RandomArtinWLPStats} 
%%\end{table}
%%The estimated quantity 

We observe an interesting phenomenon: there appears to be  a certain value, namely $p=1/D^2$, for which failure of the WLP is very common! 
%Tables \ref{table:RandomArtinWLPStatsFailure} and \ref{table:RandomArtinPlusWLPStatsFailure} show the results. 

\begin{table}[h]
In the tables below, $\hat\omega$ represents the estimated probability $\omega=\prob{A\mbox{ has the WLP}}$. 

\medskip
 
	\begin{tabular}{  l l}
	$n=3$, $D=50$ &  \\
	parameter setting &  $\hat\omega$ \\% proportion with WLP \\ 
	\hline
	 p=0.000008&  1.00\\ 
	 $p=1/D^2=0.0004$ & 0.13  \\
	 $0.04<p\leq0.1$ & 0.7 \\ 
	 $0.1<p<0.2$ & 0.85\\ 
	$0.2\leq p<0.6$ &0.99\\ 
	$p\geq$ 0.6 & 1 \\
	\hline\hline
	$n=3$, $D=100$&  \\
	\hline
	 p=0.000001&  0.98 \\ 
	 $p=1/D^2=0.0001$ & 0.03  \\
	 p=0.01 & 0.33 \\  
	 $p\geq0.1$ & $>0.9$\\
%	 $0.2\leq p\leq0.4$ & 0.97\\
%	 $p\geq 0.5$ & 1\\
	\end{tabular}
%	
%	\vspace{5mm}
%
\qquad
%	\begin{tabular}{  l l}
%	$n=3$, $D=100$&  \\
%	parameter setting &  $\hat\omega$ \\% proportion with WLP \\ 
%	\hline
%	 p=0.01 & 0.33 \\  
%	 p=0.0001 & 0.03  \\
%	 p=0.000001&  0.98 \\ 
%	\end{tabular}
%\qquad
	\begin{tabular}{  l l}
	$n=4$, $D=50$ &  \\
	parameter setting &  $\hat\omega$ \\% proportion with WLP \\ 
	\hline
	 $p=1/D^4$ &  0.95\\ 
	 $p=1/D^3=0.000008$ & 0.16\\
	 $p=1/D^2=0.0004$ & 0.05 \\
	 $p=0.01$ & 0.05 \\ 
	$p=0.2$ & 0.1\\
	$p=0.3$ & 0.88 \\
	$p=0.4$ & 0.96 \\
	$p=0.5$ & 0.98 \\
	$p\geq 0.6$ & 1\\
	\end{tabular}	
\qquad
	\begin{tabular}{  l l}
	$n=5$, $D=50$ &  \\
	parameter setting &  $\hat\omega$ \\% proportion with WLP \\ 
	\hline
	 $p=1/D^4$ &  0.95\\ 
	 $p=1/D^3=0.000008$ & 0.16\\
	 $p=1/D^2=0.0004$ & 0.01 \\
	 $p=0.02$ & 0.05 \\ 
	$p=0.1$ & 0.22\\
	$p=0.2$ & 0.50 \\
	$p=0.3$ & 0.86 \\
	$0.4\leq p\leq 0.6$ & 0.95 \\	
	$p\geq 0.7$ & 1 \\	
	\end{tabular}	
	\vspace{3mm}

	\caption{Estimated proportion $\hat\omega$ of random  Artinian algebras $A=S/(I+(x_1^D,\dots,x_n^D))$ for which WLP holds, where $\rmi \sim \rmidist(n,D,p)$.
%	A set of samples of random  artin algebras with $I+(x_1^D,\dots,x_n^D)$ for $\rmi \sim \rmidist(n,D,p)$. .... Sample size for each table entry is $N=100$.
%Left: $n=3$. Right: $n=5$. {\bf TRY n=4}
}
 \label{table:RandomArtinWLPStatsFailure} 
\end{table}

\begin{conj}  
\label{conj:failure WLP} 
Consider a random Artinian algebra of the form $\rmi+(x_1^D,\dots,x_n^D)$, where $\rmi \sim \rmidist(n,D,p)$ and $n \ge 3$. If $p=1/D^2$, the probability of $A=S/I$ having the weak Leftschetz property tends to $0$ as $D$ grows: 
\[
	\lim_{D\to\infty} \prob{\mbox{A has the WLP}} \to 0. 
\]
\end{conj}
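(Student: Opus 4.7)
The plan is to reduce the conjecture to analyzing a single linear form on the complete intersection $C := S/(x_1^D, \ldots, x_n^D)$, and then to show that the random monomial generators added to $C$ typically disrupt the Lefschetz structure near the central degree. Write $\bar{\rmi} \subset C$ for the image of $\rmi$, so that $A = C/\bar{\rmi}$. By the Proposition immediately preceding the conjecture (WLP for Artinian monomial ideals is detected by the single linear form $x_1 + \cdots + x_n$, \cite{MMN}), it suffices to prove that, with probability tending to $1$, the multiplication $\phi_d^A$ by $L := x_1 + \cdots + x_n$ fails to have maximal rank on some $[A]_d \to [A]_{d+1}$. In characteristic zero, $C$ is known (Stanley, Watanabe, Reid--Roberts--Roitman) to have the strong Lefschetz property with $L$ as a Lefschetz element; in particular $\phi_d^C$ has maximal rank for every $d$ and is an isomorphism between equidimensional central components.

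Next, by linearity of expectation (in the spirit of \Cref{cor:add powers}), at $p = 1/D^2$ the expected number of monomial generators of $\rmi$ is $\sum_{d=1}^{D} p\binom{d+n-1}{n-1} = \Theta(D^{n-2})$, and restricted to a constant-width window of degrees around the center $c := \lfloor n(D-1)/2 \rfloor$ it is $\Theta(D^{n-3})$. For $n \geq 4$ this expectation grows without bound, so a Chernoff concentration yields $\Omega(D^{n-3})$ generators in the window with probability tending to $1$; for $n = 3$ the expected count is only $\Theta(1)$, and one must instead use a slowly widening window of degrees to obtain an unbounded expected count while preserving independence.

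The core technical step is a single-generator claim: for a uniformly random monomial $m$ of degree inside this band around $c$, the quotient $C/\langle m \rangle$ fails the Lefschetz condition at some $\phi_d$ with probability bounded away from zero. The intuition is that $\phi_c^C$ is a delicate (iso)morphism between equidimensional spaces, and quotienting by a random $m$ typically breaks this equilibrium asymmetrically: one loses a different number of source dimensions than target dimensions. Concretely, one would exhibit for a generic $m$ an explicit kernel class $[f] \in [C/\langle m \rangle]_c$ with $Lf \in \langle m \rangle \cdot [C]_c$ but $f \notin \langle m \rangle \cdot [C]_{c-1}$. A lattice-path or rectangular Young-tableau model of the Lefschetz action on $C$ provides a natural framework in which such kernel classes correspond to explicit combinatorial configurations whose existence for a random $m$ can in principle be estimated.

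Finally, by independence of the \er\ selections and the expected-count calculation above, the event ``some $m$ chosen by $\rmidist(n,D,p)$ induces WLP-failure of $C/\langle m \rangle$'' has probability tending to $1$. To conclude, one must verify that the obstruction created by a single $m$ is not destroyed when the remaining generators of $\bar{\rmi}$ are quotiented out — this requires a stability lemma showing that the corank of $\phi_d^A$ at the critical degree is non-decreasing under successive monomial quotients in this regime. The \textbf{main obstacle} is the single-generator claim in the previous paragraph: a quantitative lower bound on the probability that a random monomial addition breaks the central Lefschetz isomorphism of a uniform complete intersection is a genuinely combinatorial question, closely related to open problems on WLP for monomial almost complete intersections, and it is precisely this difficulty that explains why the authors pose the statement as a conjecture rather than a theorem. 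A secondary difficulty is the stability step, since in principle adding further random relations could restore a rank that was lost; ruling this out quantitatively requires controlling the interaction between the individual obstructions coming from distinct generators of $\bar{\rmi}$.
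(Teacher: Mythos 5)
You should first be aware that the paper does not prove this statement: it is posed as a \emph{conjecture}, supported only by the simulation data in Tables~5 and~6, so there is no proof of record to compare your argument against. Your proposal is an honest attack plan rather than a proof, and you yourself correctly identify the two steps that remain open --- the quantitative single-generator claim (that quotienting the monomial complete intersection $C=S/(x_1^D,\dots,x_n^D)$ by one random monomial breaks the central Lefschetz isomorphism with probability bounded away from zero) and the stability lemma (that the obstruction survives the remaining random relations). Both of these are genuine gaps, and the first is essentially the hard open problem about WLP for monomial almost complete intersections studied in \cite{MMN}; acknowledging them is the right call, but it means the conjecture remains unproved by your outline.

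Beyond the acknowledged gaps, one step of your counting is concretely wrong and would need repair. The Erd\H{o}s--R\'enyi model $\rmidist(n,D,p)$ only draws generators of degree at most $D$, whereas the central degree of $C$ is $c=\lfloor n(D-1)/2\rfloor$, which exceeds $D$ for every $n\ge 3$. Hence your ``constant-width window of degrees around the center $c$'' contains \emph{no} candidate generators at all, and the claimed $\Theta(D^{n-3})$ count of generators in that window is vacuous as stated. What you presumably mean is the expected number of generators of degree close to $D$ (the top allowed degree), which is indeed $p\binom{D+n-1}{n-1}=\Theta(D^{n-3})$; but then the relevant question is how a generator of degree $\le D$, via its multiples, perturbs the map $[A]_c\to[A]_{c+1}$ in the much higher degree $c$, which is a different and harder combinatorial problem than the one your lattice-path picture describes. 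Note also that for $n=3$ this count is $\Theta(1)$, so the Chernoff concentration you invoke gives nothing there, and the total expected number of generators is $\Theta(D^{n-2})=\Theta(D)$ spread over all degrees --- the case $n=3$ is exactly the one the paper's simulations probe most heavily, so a proof strategy that degenerates at $n=3$ misses the main case. Finally, your appeal to the strong Lefschetz property of $C$ requires characteristic zero (or large characteristic), an assumption the paper never imposes; any eventual proof should make this explicit.
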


The  simulation data in Tables~\ref{table:evidence of WLP failure} and \ref{table:RandomArtinPlusWLPStatsFailure}  support the conjecture. 
%--- RANDOM ARTIN ALGEBRAS I_ER + (x_1^D, ..., x_n^D) : 
%-- found a pattern? when D= 100 (not when D=50), around p=1/D^2, it seems that LOW PROB of wlp !! 
%-- let's test it: 
%-- n=3
\begin{table}
	\begin{tabular}{ lll}
	parameter & non-unimodal $h$-vectors &  proportion with WLP\\ 
	$n=3$, $p=1/D^2$&  & \\ 
	\hline
	$D=50$ & 11 &  0.16\\ 
	$D=100$ & 31 &  0.02\\ 
	$D=150$ & 46 &  0\\ 
	$D=200$ & 40 &  0.01\\ 
	\hline\hline
	\end{tabular}	
\qquad
	\begin{tabular}{ lll}
	parameter & non-unimodal $h$-vectors &  proportion with WLP\\ 
	$n=4$, $p=1/D^2$  && \\ 
	\hline
	$D=50$ & 5 &  0.1\\ 
	$D=100$ & 20&  0.03\\ 
	$D=150$ & 39 &  0.01\\ 
	\end{tabular}	
\caption{Evidence of failure of WLP for $n=3$ and increasing $D$, with $p=1/D^2$. Sample size $N=100$ for each parameter setting.} 
\label{table:evidence of WLP failure}
\end{table}

\begin{table}[h]
	\begin{tabular}{  l l}
	$n=3$, $D=50$ &  \\
	parameter setting & $\hat{w}$ \\% proportion with WLP \\ 
	\hline
	$p=1/D^3=0.000008$ & 1\\
	$p=1/D^2=0.0004$ & 0.3\\
	$p=1/D=0.02$ & 0.51 \\
	\\
	$n=3$, $D=100$ &  \\
	\hline
	$p=1/D^3=0.000001$ & 0.28\\
	$p=1/D^2=0.0001$ & 0.02\\
	$p=1/D=0.01$ & 1 \\
	\end{tabular}
\qquad
	\begin{tabular}{  l l}
	$n=4$, $D=20*$ &  \\
	parameter setting &  $\hat{w}$ \\% proportion with WLP \\ 
	\hline
	$p=0.0004$ & 1\\
	$p=1/D^2=0.0025$ & 0.1\\
	$p=0.001$ & 0.9\\
	$p=0.01$ & 0  - 0.1\\
	$p=0.02$ & 0.06\\
	\end{tabular}
\qquad 
	\begin{tabular}{  l l}
	$n=4$, $D=50$ &  \\
	parameter setting & $\hat{w}$ \\% proportion with WLP \\ 
	\hline
	$p=0.1$ & 0.72\\
	$p=0.2$ & 0.76 \\
	$p=0.3$ & 0.88 \\
	$p=0.4$ & 0.96 \\
	$p=0.5$ & 0.98 \\
	$p\geq 0.6$ & 1 \\	
	\end{tabular}
	
	\vspace{3mm}
	\caption{Estimated proportion $\hat{w}$ of random  artin algebras $A=S/(I+(x_1,\dots,x_n)^D)$ for which WLP holds, where $\rmi \sim \rmidist(n,D,p)$. This data was nontrivial to compute, so we are only showing the results for $n=3$ and $n=4$ variables. For very small values of $p<0.02$, the sample size was reduced to $10$ or $20$ in repeated simulations, to allow for reasonable computation time. 
%	Sample size for each table entry is $N=100$.
%Left: $n=3$. Right: $n=5$.
}
 \label{table:RandomArtinPlusWLPStatsFailure} 
\end{table}

%%%%%%%%%%%%%%%%%%%%%%%%%%%%%%%%%%%%%%

\section{Pure  $O$-sequences} 
\label{sec-Pure O} 

There is another model for generating  Artinian monomial ideals. One can pick monomials that generated the inverse system of an algebra. If one chooses all monomial of the same degree, say $D$, then  the corresponding quotient is called a  \emph{monomial level algebra} and its  Hilbert function is called a \emph{pure $O$-sequence}.  Studying pure $O$-sequences is interesting in its own right. We refer to \cite{BMMNZ} for background and further information.

 The main result of this section, \Cref{thm:unimodal level}, shows that the expected Hilbert function of a monomial level algebra is not just unimodal, but even log-concave if one fixes $D$ and a probability  $p$ for choosing monomials of degree $D$ in $S$ in the socle. 
 
% To be a bit more precise, fix $D$ and a probability  $p$ for choosing monomials of degree $D$ in $S$. This will produce a socle of an artinian ideal $I$ of $S$ with an expected number of elements, the \emph{type} of $A = S/I$. Now we consider analogous questions: 
 
 Throughout this section,  $A=S/\rmi$ denotes an Artinian level algebra whose socle is generated by a set $B$ of  monomials of degree $D$.  It follows that $\rmi$ is a monomial ideal. It is  is determined by the socle of $A$ because $I = \Ann (B)$ is the annihilator of $B$ in the sense of Macaulay-Matlis duality. Recall that this annihilator can be computed as follows. For a monomial $x^b = x_1^{b_1} \cdots x_n^{b_n}$, it is well-known that 
 \[
 \Ann (x^b) = \langle x_1^{b_1+1},\ldots,x_n^{b_n +1} \rangle. 
 \]
 Thus, $\Ann (B)$ can be explicitly determined using 
\[
 \Ann (B) = \bigcap_{x^b \in B} \Ann (x^b).  
\]

 \begin{remark}
 (i) The Hilbert function of $A$ is always increasing in degrees $\le \frac{D}{2}$ (see, e.g., \cite{BMMNZ}). 
 
 (ii) For 3 variables, the Hilbert function is unimodal if the type is at most 2 (see \cite{BMMNZ}). This was extended to algebras with type 3 in \cite{B}. 
 
 (iii) For 4 variables, the Hilbert function is unimodal if the type is at most 2 (see \cite{B-15}). 
 \end{remark} 
 
 The existence of non-unimodal pure $O$-sequences is interesting in its own right. It is known that one can have as many peaks as desired if one increases the number of variables (see \cite[Theorem 3.9]{BMMNZ}). However, if one fixes the number of variables, there are again open questions. 
 
 \begin{question} Fix the number of variables to be, say, 3. 
 
 (i) What is the least $D$ such that there is a non-unimodal pure $O$-sequence? 
 
 (ii) What is the least type (not fixing $D$) such that there is a non-unimodal pure $O$-sequence? 
 
 (iii) Combinations of (i) and (ii). 
 \end{question}

We now investigate unimodality  for a randomly generated level algebra.  We use the following model, where 
 we denote by $\mon(n,D)$ the set of all monomials in $n$ variables of degree exactly $D$.

\begin{defn} %[Model 1] 
   \label{def:model 1} 
Let $p\in(0,1)$.  Let $\mathcal{B}$ be a random subset of $\mon(n,D)$ formed by including each element of $\mon(n,D)$ independently with probability $p$.  Set $\mathcal{A} = S/ \Ann \mathcal{B}$ so that  $\mathcal{B}   =\soc{\mathcal{A}}$.
\end{defn}

 One easily sees that the expected type of $A$ is 
\[
p\cdot\binom{D+n-1}{n-1}\approx pD^{n-1}.
\]  

As mentioned above, there are level algebras whose Hilbert functions are not unimodal. However, the following result suggests that there are not too many such algebras.  

\begin{thm} 
   \label{thm:unimodal level} 
Let $A$ be a random level algebra generated as described in \Cref{def:model 1}. 
 Then the expected Hilbert function in degree $j$ with $0 \le j < D$ is 
\[
\expect{h_j}=\binom{j+n-1}{n-1}\left(1-q^{\binom{D-j+n-1}{n-1}}\right), 
\]
and the 
expected $h$-vector $\expect{\underline{h}_A}=(1,\expect{h_1},\expect{h_2},\ldots,)$ of $A$ is log-concave, and so in particular unimodal. 
\end{thm}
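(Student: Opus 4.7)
The plan is in two steps: first, derive the formula for $\expect{h_j}$ by a linearity-of-expectation argument using the Macaulay--Matlis description of $I$; second, establish log-concavity by factoring $\expect{h_j}$ into two log-concave sequences.

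For the formula, I would use the combinatorial description of $\Ann(x^b) = \langle x_1^{b_1+1},\ldots,x_n^{b_n+1}\rangle$ recalled above the theorem: a monomial $x^a$ lies outside $\Ann(x^b)$ iff $x^a$ divides $x^b$, so $x^a \notin I = \bigcap_{x^b\in\mathcal B}\Ann(x^b)$ iff $x^a$ divides some $x^b \in \mathcal B$. For $|a| = j$, the set of degree-$D$ monomials divisible by $x^a$ is in bijection with $\mon(n, D-j)$ via division by $x^a$, hence has cardinality $\binom{D-j+n-1}{n-1}$. Since each element of $\mon(n,D)$ is placed in $\mathcal B$ independently with probability $p$, we obtain $\prob{x^a \notin I} = 1 - q^{\binom{D-j+n-1}{n-1}}$, and summing over the $\binom{j+n-1}{n-1}$ monomials of $S_j$ by linearity produces the stated formula.

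For the log-concavity, I would write $\expect{h_j} = a_j (1 - q^{b_j})$ with $a_j := \binom{j+n-1}{n-1}$ and $b_j := \binom{D-j+n-1}{n-1}$. Since a pointwise product of log-concave sequences is log-concave, it suffices to treat each factor. The sequences $a_j$ and $b_j = a_{D-j}$ are log-concave by the classical log-concavity of binomial coefficients, visible from the decreasing ratio $a_{j+1}/a_j = (j+n)/(j+1)$. The core task is then log-concavity of $\phi_j := 1 - q^{b_j}$, for which my key lemma would be that $t \mapsto \log(1 - q^t)$ is concave not only in $t$ but also in $\log t$. Setting $\varphi(s) := \log(1 - q^{e^s})$, a direct computation gives $\varphi'(s) = |\log q|\, h(e^s)$ with $h(\mu) := \mu q^\mu/(1 - q^\mu)$; then $\frac{d}{d\mu}\log h(\mu) = \frac{1}{\mu} - \frac{|\log q|}{1 - q^\mu} \le 0$ reduces to the elementary inequality $1 - e^{-x} \le x$ at $x := \mu|\log q|$. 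So $h$ is decreasing, $\varphi$ is concave, and midpoint concavity reads $(1 - q^{\sqrt{Bc}})^2 \ge (1 - q^B)(1 - q^c)$ for all $B,c > 0$. Taking $B = b_{j-1}$, $c = b_{j+1}$ and combining with log-concavity of $b_j$ (so that $b_j \ge \sqrt{b_{j-1} b_{j+1}}$) and the monotonicity of $1 - q^t$ in $t$, one concludes $(1 - q^{b_j})^2 \ge (1 - q^{b_{j-1}})(1 - q^{b_{j+1}})$, as required.

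The hard part is that the natural first attempt --- pairing ordinary concavity of $1 - q^t$ in $t$ with the \emph{convexity} of $b_j$ in $j$ --- fails, because those two properties pull in opposite directions. The crucial move is to upgrade to the \emph{multiplicative} concavity of $\log(1 - q^t)$ (concavity in $\log t$), which then pairs naturally with the log-concavity --- rather than convexity --- of the binomial sequence $b_j$.
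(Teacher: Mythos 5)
Your proof is correct. The first half (the formula for $\expect{h_j}$) is essentially the paper's own argument: indicator variables for the events $x^a \notin I$, the observation that $x^a \notin \Ann(\mathcal B)$ exactly when some degree-$D$ multiple of $x^a$ lies in the socle, the count $\binom{D-j+n-1}{n-1}$ of such multiples, and linearity of expectation. The second half, however, takes a genuinely different route. The paper isolates the inequality $\bigl[1-q^{b_{j}}\bigr]^2 \ge \bigl[1-q^{b_{j-1}}\bigr]\bigl[1-q^{b_{j+1}}\bigr]$ (with $b_j = \binom{D-j+n-1}{n-1}$) as \Cref{prop:non-negativity} and proves it by a long chain of four differentiations with careful sign analysis of each derivative, relying along the way on Pascal-type identities and log-concavity relations among the specific binomial exponents; it then invokes this together with the (implicit) log-concavity of $\binom{j+n-1}{n-1}$ and closure of log-concavity under products. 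You replace that computation with a short structural lemma: $t \mapsto \log(1-q^t)$ is concave in $\log t$, which reduces after one differentiation to $1 - e^{-x} \le x$. Combined with log-concavity of the exponent sequence $b_j$ and monotonicity of $1-q^t$, this yields the same inequality. Your route is shorter, and strictly more general --- it shows $j \mapsto 1-q^{c_j}$ is log-concave for \emph{any} positive log-concave sequence $(c_j)$, whereas the paper's proposition is tailored to these particular binomial coefficients. Your closing remark correctly diagnoses why the naive pairing of concavity of $1-q^t$ with convexity of $b_j$ cannot work, which is presumably what forced the paper into the brute-force computation. The paper's approach buys only self-containedness at the level of elementary calculus; yours is the cleaner argument.
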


To understand the notation, the reader should recall the definition of the expected Hilbert function from page~\pageref{expected Hilbert}: 
Since $h_A(d)$ is a function of a random variable $A=S/I$, for every degree $d$ the quantity $h_A(d)$ is random. For each degree $d$, we can compute the expectation of $h_A(d)$, which we denote by  $\expect{h_d}$. We can also compute the expectation of the $h$-vector for the set of all level algebras generated under the model in \Cref{def:model 1}.  %Model 1. 
This we denote by $\expect{\underline{h}_A}$, where the expectation of the random vector is taken entry-wise.

In order to establish \Cref{thm:unimodal level} we need a preparatory result. The argument is elementary though it requires considerable work. 

\begin{proposition}
   \label{prop:non-negativity} 
Fix integers $k, m \ge 0$.  
Define a real function $f \colon \R \to \R$ by 
\[
f(x) = \big [1 - x^{\binom{m+k+1}{m}}  \big ]^2 -  \big  [1 - x^{\binom{m+k+2}{m}}   \big ]  \cdot  \big [1 - x^{\binom{m+k}{m}}  \big ]. 
\] 
If $0 \le x \le 1$, then $f(x) \ge 0$. 
\end{proposition}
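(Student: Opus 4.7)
The plan is to interpret the inequality as the log-concavity of the sequence $(1 - x^{A_t})_{t \ge 0}$, where $A_t = \binom{m+k+t}{m}$, and to deduce it from two independent ingredients: the classical log-concavity of the binomial sequence $(A_t)$ itself, and a concavity property of $u \mapsto \log(1 - x^{e^u})$.

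Setting $b = A_0$, $a = A_1$, $c = A_2$, a direct ratio computation gives $a/b = (m+k+1)/(k+1)$ and $c/a = (m+k+2)/(k+2)$, so the inequality $a/b \ge c/a$ reduces to $m \ge 0$. Hence $a \ge \sqrt{bc}$, and since $t \mapsto 1 - x^t$ is non-negative and non-decreasing on $[0, \infty)$ for $0 \le x \le 1$, we get $(1 - x^a)^2 \ge (1 - x^{\sqrt{bc}})^2$. It therefore suffices to prove
\[
(1 - x^{\sqrt{bc}})^2 \ge (1 - x^b)(1 - x^c).
\]
For $0 < x < 1$, taking logarithms and substituting $u_i = \log A_i$, this is precisely the midpoint concavity $2 h(\tfrac{u_0 + u_2}{2}) \ge h(u_0) + h(u_2)$ for the function $h(u) := \log(1 - x^{e^u})$. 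The boundary values $x \in \{0, 1\}$, as well as the degenerate case $m = 0$ (where $a = b = c = 1$), give $f(x) = 0$ immediately.

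To verify concavity of $h$, I would differentiate directly: writing $z = (-\log x)\, e^u > 0$, one finds $h'(u) = z/(e^z - 1)$, so $h''(u)$ has the sign of $\tfrac{d}{dz}\left[\tfrac{z}{e^z - 1}\right] = \tfrac{(e^z - 1) - z e^z}{(e^z - 1)^2}$. The numerator $e^z - 1 - z e^z$ vanishes at $z = 0$ and has $z$-derivative $-z e^z \le 0$, so it is non-positive on $[0, \infty)$, yielding $h'' \le 0$. The main obstacle here is psychological rather than technical: a direct expansion $f(x) = x^b + x^c - 2x^a + x^{2a} - x^{b+c}$ resists grouping into manifestly non-negative pieces, because for instance $x^b + x^c - 2x^a$ can change sign on $[0, 1]$; only the substitution $u = \log t$ cleanly separates the role of binomial log-concavity ($a \ge \sqrt{bc}$) from the pointwise concavity of $\log(1 - x^{e^u})$.
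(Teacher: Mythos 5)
Your argument is correct, and it takes a genuinely different route from the paper. The paper proves the inequality by brute force: it writes $f(x)=x^{\binom{m+k}{m}}g(x)$ and then runs a four-level chain of differentiations ($g'$, $h'$, $p'$, $q'$), tracking signs at $x=0$ and $x=1$ with Pascal-type identities such as $\binom{m+k+1}{m-1}+\binom{m+k+1}{m}=\binom{m+k+2}{m}$, to conclude that $g$ is decreasing with $g(1)=0$. You instead split the statement into two independent facts: the log-concavity of the exponents, $\binom{m+k+1}{m}^2\ge\binom{m+k}{m}\binom{m+k+2}{m}$, which your ratio computation correctly reduces to $m\ge 0$; and the concavity of $u\mapsto\log\bigl(1-x^{e^u}\bigr)$, which via $z=(-\log x)e^u$ reduces to the monotonicity of $z/(e^z-1)$ — all of these steps check out, including the monotone squaring step $(1-x^a)^2\ge(1-x^{\sqrt{bc}})^2$ (both factors are nonnegative) and the boundary and $m=0$ cases. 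What your approach buys is brevity and a structural explanation: it shows that for \emph{any} positive log-concave sequence $A_t$ the sequence $1-x^{A_t}$ is again log-concave for $0\le x\le 1$, so the binomial coefficients enter only through their well-known log-concavity rather than through delicate sign bookkeeping; it also makes transparent why the product with the other log-concave factor $\binom{j+n-1}{n-1}$ in \Cref{thm:unimodal level} stays log-concave. What the paper's computation buys, arguably, is that it is entirely algebraic/elementary in a single variable $x$ and avoids invoking concavity of an auxiliary transcendental function, but at the cost of considerable length.
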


\begin{proof}
A computation shows that 
\begin{align*}
f(x) & = -2x^{\binom{m+k+1}{m}}   + x^{2 \binom{m+k+1}{m}} + x^{\binom{m+k}{m}} 
  + x^{\binom{m+k+2}{m}} - x^{\binom{m+k+2}{m} + \binom{m+k}{m}} \\
  & = x^{\binom{m+k}{m}} g(x), 
\end{align*}
where $g \colon \R \to \R$ is the function defined by 
\[
g(x) = 1 + x^{\binom{m+k+1}{m-1} + \binom{m+k}{m-1}} + x^{\binom{m+k+1}{m} + \binom{m+k}{m-1}} -
2 x^{\binom{m+k}{m-1}} - x^{\binom{m+k+2}{m}}. 
\]
Since $f(1) = 0$, we observe that 
\begin{equation}
\label{eq:g at 1} 
g(1) = 0. 
\end{equation}

Differentiating $g$, we obtain 
\[ 
g'(x)  = x^{\binom{m+k}{m-1}} h(x), 
\]
where $h \colon \R \to \R$ is given by 
\begin{align*}
h (x)  = & \Big [ \binom{m+k+1}{m-1} + \binom{m+k}{m-1} \Big ] x^{\binom{m+k+1}{m-1} }  \\
& + \Big [ \binom{m+k+1}{m} 
+ \binom{m+k}{m-1} \Big ]  x^{\binom{m+k+1}{m} }  \\
& - 2 \binom{m+k}{m-1} - \binom{m+k+2}{m} x^{\binom{m+k}{m} + \binom{m+k+1}{m-1}}. 
\end{align*}
Note that 
\begin{equation}
   \label{eq:h at 1} 
   h(1) =  \binom{m+k+1}{m-1}  +  \binom{m+k+1}{m}  -  \binom{m+k+2}{m} = 0. 
\end{equation}

For the derivative of $h$, we obtain
\begin{align*}
h' (x) = x^{ \binom{m+k+1}{m-1} - 1}  p(x), 
\end{align*}
where $p \colon \R \to \R$ is defined by 
\begin{align*}
p (x) = &  \binom{m+k+1}{m-1} \Big  [ \binom{m+k+1}{m-1} + \binom{m+k}{m-1} \Big ] \\
& + \binom{m+k+1}{m} \Big [ \binom{m+k+1}{m}  + \binom{m+k}{m-1} \Big ] x^{\binom{m+k+1}{m} - \binom{m+k+1}{m-1} } \\
& - \binom{m+k+2}{m} \Big [ \binom{m+k}{m} + \binom{m+k+1}{m-1} \Big ] x^{\binom{m+k}{m}}. 
\end{align*}
Obviously, we have 
\begin{equation}
    \label{eq:p at zero} 
    p(0) > 0. 
\end{equation}
% In order to estimate $p(1)$, we write 
A computation shows that 
\begin{align}
  \label{eq:p(1) as sum} 
 p(1) & = \binom{m+k+1}{m-1}^2 + \binom{m+k+1}{m}^2  + \binom{m+k+2}{m} \binom{m+k}{m-1}  \nonumber  \\
 & \hspace*{.6cm}
     - \binom{m+k+2}{m} \Big [ \binom{m+k}{m} + \binom{m+k+1}{m-1} \Big ] \nonumber  \\
  & = C_1 + C_2
\end{align}
with 
\[
C_1 = \binom{m+k+1}{m}^2 - \binom{m+k+2}{m}  \binom{m+k}{m}
\]
and 
\[
C_2 = \binom{m+k+1}{m-1}^2 - \binom{m+k+2}{m}  \binom{m+k+1}{m-1}. 
\]
Notice that one can rewrite $C_1$ as 
\begin{align*}
C_1 = &  \left [ \frac{(m+k) \cdots (k+3)}{m !} \right ]^2  (m+k+1) (k+2) \big [ (m+k+1) (k+2) - (m+k+2) (k+1)  \big ]. 
\end{align*}
It is straightforward to check that the factors of $C_1$ are all positive, and so we see that 
\[
C_1 > 0. 
\]

Similarly, we get 
\[
C_2 =  \left [ \frac{(m+k) \cdots (k+3)}{(m-2) !} \right ]^2  \cdot \frac{m+k+1}{m-1} \cdot 
\Big [ \frac{m+k+1}{m-1} - \frac{m+k+2}{m}  \Big ]. 
\]
Again, one checks that the factors of $C_2$ are all positive, and thus
\[
C_2 > 0. 
\]
Combined with Equation \eqref{eq:p(1) as sum}, the last two estimates show that 
\begin{equation}
   \label{eq: p at one}
p(1) > 0. 
\end{equation}

Yet another differentiation gives
\[
p' (x) = x^{\binom{m+k+1}{m} - \binom{m+k+1}{m-1} -1 } q(x), 
\]
where $q \colon \R \to \R$ is defined by 
\begin{align*}
q(x) = &  \binom{m+k+1}{m} \Big [ \binom{m+k+1}{m}  + \binom{m+k}{m-1} \Big ] \Big [  \binom{m+k+1}{m} - \binom{m+k+1}{m-1} \Big ] \\
& - \binom{m+k+2}{m} \Big [ \binom{m+k}{m} + \binom{m+k+1}{m-1} \Big ] \binom{m+k}{m} x^{\binom{m+k+1}{m-1} - \binom{m+k}{m-1}} . 
\end{align*}
Clearly, we have 
\[
q(0) > 0. 
\]
Moreover, comparing factors in the same position 
%and using 
%\[
% \binom{m+k+1}{m}  + \binom{m+k}{m-1}  - \binom{m+k}{m} -  \binom{m+k+1}{m-1} = 
%\]
 we conclude that 
\begin{align*}
q(1) & =  \binom{m+k+1}{m} \Big [ \binom{m+k+1}{m}  + \binom{m+k}{m-1} \Big ] \Big [  \binom{m+k+1}{m} - \binom{m+k+1}{m-1}  \Big ] \\
& \hspace*{.6cm}  - \Big [ \binom{m+k}{m} + \binom{m+k+1}{m-1} \Big ] \binom{m+k+2}{m}  \binom{m+k}{m} \\
& < 0. 
\end{align*}
Since $q$ is a decreasing function, it follows that it has exactly one zero, $x_0$, in the closed interval $[0,1]$, and so we get that 
\begin{align*} 
q (x) & \ge 0 \; \text{ if } x \in [0, x_0], \; \text{ and } \\
q (x) & \le 0 \; \text{ if } x \in [x_0, 1]. 
\end{align*}
Recall that $p' x)$ is a product of $q(x)$ and a power of $x$. Thus, we see that $q$ is increasing on the interval  $[0, x_0]$ and decreasing on $[x_0, 1]$. Using that $p(0) > 0$ and $p (1) > 0$ (see Inequalities \eqref{eq:p at zero}  and \eqref{eq: p at one}), it follows that $p$ is positive on the interval $[0,1]$.  
As $h' (x)$ is a product of $p(x)$ and a power of $x$, we conclude that $p$ is increasing on $[0,1]$. Hence $h(1) = 0$ (see Equation \eqref{eq:h at 1}) implies $h(x) \le 0$ if $x \in [0, 1]$. 
Now, $g' (x)$ is a product of $p(x)$ and a power of $x$. So,  we conclude that $g$ is decreasing on the interval $[0,1]$. By Equation \eqref{eq:g at 1}, we know that $g (1) = 0$. It follows that $g$ is nonnegative on $[0,1]$, and so the same is true for $f$ as well, which completes the argument. 
\end{proof}

The main result follows now easily: 

\begin{proof}[Proof of \Cref{thm:unimodal level}]  
Fix an integer $d$ with $1\le d\le D$.  For each monomial $x^{\alpha}\in S_d$,  let $1_{\alpha}$ be the indicator random variable for the event $x^{\alpha}\not\in\rmi$ where $A=S/\rmi$.  
Then, $1_{\alpha} \neq 1$ if and only if every monomial $x^{\alpha} x^{\beta}$ with $x^{\beta} \in [S]_{D_d}$ is not in $Soc(A)$. There are $\binom{D-d+n-1}{n-1}$ monomials of degree $D-d$ in $S$. Hence none of the monomials $x^{\alpha} x^{\beta}$ is in $Soc (A)$ with probability $q^{\binom{D-d+n-1}{n-1}}$. It follows that 
\[
\expect{ 1_{\alpha}} = \prob{1_{\alpha}=1}=1-q^{\binom{D-d+n-1}{n-1}}.
\]  
Since $h_d=\sum_{x^{\alpha}\in S_d} 1_{\alpha}$, by linearity of expectation we find that
\[
\expect{h_d}=\binom{d+n-1}{n-1}\left(1-q^{\binom{D-d+n-1}{n-1}} \right), 
\]  
as claimed. 

Thus,  log-concavity of the expected $h$-vector follows from \Cref{prop:non-negativity} with $m = n-1$ and $k = D-d$. 
\end{proof}

%%%%%%%%%%%%%%%%%%%%%%%%%%%%%%%%

\section{WLP for Random level algebras}
  \label{sec:WLP level} 

In this section, we investigate when the WLP holds for a randomly generated level algebra.  We consider two random models.  In each case, we produce a random level algebra $\mathcal{A}$ by choosing a random collection of monomials to be $\soc{A}$. As above, we denote by $\mon(n,D)$ the set of all monomials in $n$ variables of degree exactly $D$.

\begin{defn}[Model 1] 
    \label{def:level model 1}
Let $p\in(0,1)$.  Let $\mathcal{B}$ be a non-empty random subset of $\mon(n,D)$ formed by including each element of $\mon(n,D)$ independently with probability $p$.  Set $\mathcal{B}=\soc{\mathcal{A}}$.
\end{defn}

As pointed out in the previous section,  the random set $\mathcal{B}$ uniquely determines the monomial level algebra $\mathcal{A}$.  Under this model, for fixed $B\subset \mon(n,D)$, one has
\[
\prob{\soc{\mathcal{A}}=B}=p^{|B|}q^{\binom{n-1+D}{n-1}-|B|}.
\]

One can  think of another natural model for randomly generating monomial level algebras, namely the following. 

\begin{defn}[Model 2]  Fix positive integers $D$ and $t$.  Choose $\mathcal{B}\subset\mon(n,D)$ with $|\mathcal{B}|$ uniformly among all $t$-subsets of $\mon(n,D)$.  Set $\mathcal{B}=\soc{\mathcal{A}}$.
\end{defn}
%{\bf Added 13 sep 2023:} in random graphs, the two models are shown to be equivalent because expected number of monomials under model 1 is exactly the number in model 2; but that's graphs not monomial algebras. I have no code that runs Model 2 yet. I have only run simulations with model 1. Is that OK?

Under Model 2, for fixed $B\subset \mon(n,D)$ with $|B|=t$, 
\[
\prob{\soc{\mathcal{\mathcal{A}}}=B}=\binom{\binom{n-1+D}{n-1}}{t}^{-1}.
\]  Thus, in this model, $\mathcal{A}$ is chosen uniformly among all codimension $n$ Artinian level algebras of socle degree $D$ and type $t$. 

In this paper, we do not simulate from Model 2, but leave it for further exploration due to the following folklore fact in random graph theory: \er\ \cite{ER} defined the random graph model on $n$ vertices using a parameter $t$ for the fixed number of edges in the graph. Gilbert \cite{Gilbert} defined the random graph model on $n$ vertices using a probability parameter $p$ for each edge being present or absent, independently of other edges. There is a heuristic equivalence between these two models, based on the law of large numbers; namely that if $pn^2\to\infty$ then the random graphs from the \er\ model should behave similarly to the random graphs from  Gilbert's model. The graph theory literatures shows this to be true for many graph properties. It would be interesting to explore the implications of this to algebraic properties of random monomial ideals generated by the two models.

\medskip 

Compared to unimodality of Hilbert functions, less is known when a monomial level algebra has the WLP.  Note though if $A$ has socle type one then it is a complete intersection and has the WLP thanks to a result of \cite{stanley}, \cite{watanabe} and
\cite{RRR}.
However, there are algebras with socle type two that do not have the WLP. 

\begin{example}[{\cite[Proposition 7.13]{BMMNZ}}]
Assume $n$ is even and $N \ge 5$. Consider 
\[
B = \{ x_1^{N-3} x_2^{N-1} x_3^{N-2} \cdots x_n^{N-2}, 
x_1^{N-1} x_2^{N-3} x_3^{N-2} \cdots x_n^{N-2}\}. 
\]
Then $A = S/ \Ann (B)$ does not have the WLP.  
\end{example}

Again, one wonders how common level algebras are that fail the WLP. In order to quantify such algebras, a first step could be to investigate if threshold results are true.

\begin{question} 
   \label{quest:threshold}
Fix $n \geq 3$ and consider probability $p = p(D)$ as $D \to \infty$: 

(i) Is it true that $\lim_{D \to \infty} \prob {A \text{ has WLP}} = 1$? 

(ii) Are there thresholds, i.e., functions $f(D), g(D)$ such that 
\[
\lim_{D \to \infty}  \prob {A \text{ has WLP}} = 1 
\]
if $p(D) \ll f(D)$, i.e., $ \lim_{D \to \infty} \frac{p(D)}{f(D)} = 0$, or $p(D) \gg g(D)$, i.e., $ \lim_{D \to \infty} \frac{g(D)}{p(D)} = 0$? 
\end{question}

We need some preparation in order to give some answer to Question (ii). 

\begin{lm}
\label{lem:almost full socle} 
Let $A$ be a random level algebra generated as described in \Cref{def:level model 1} such that $\soc  A$ is generated by at least $\binom{n-1+d}{n-1} - 1$ elements. Then $A$ has the weak Lefschetz property. 
\end{lm}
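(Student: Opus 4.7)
My plan is to split into two cases according to the value of $|\mathcal{B}|$. Since $\mathcal{B} \subseteq \mon(n,D)$, the hypothesis leaves only the possibilities $\mathcal{B} = \mon(n,D)$ or $\mathcal{B} = \mon(n,D) \setminus \{x^a\}$ for some single monomial $x^a$ of degree $D$.

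In the first case I would verify by a direct Macaulay--Matlis computation that $\Ann(\mathcal{B}) = \mathfrak{m}^{D+1}$: for any monomial $x^c$ of degree at most $D$ one can find some $x^b \in \mon(n,D)$ with $b \ge c$ componentwise, so $x^c \notin \Ann(x^b)$; conversely, everything in $\mathfrak{m}^{D+1}$ annihilates every element of $[S]_D$. Hence $A = S/\mathfrak{m}^{D+1}$ is the truncated polynomial ring, for which multiplication by any nonzero linear form is injective from $[A]_{i-1}$ to $[A]_i$ for $1 \le i \le D$ (these graded pieces coincide with $[S]_{i-1}$ and $[S]_i$, and $S$ is a domain) and is the zero map for $i > D$, so $A$ has the WLP.

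In the second case I would compute $\Ann(\mathcal{B}) = \mathfrak{m}^{D+1} + (x^a)$. The key combinatorial observation is that, because $n \ge 2$, any monomial $x^c$ of degree less than $D$ divides at least two distinct degree-$D$ monomials, so at least one of them lies in $\mathcal{B}$ and hence $x^c \notin \Ann(\mathcal{B})$; meanwhile $x^a$ itself annihilates every $x^b \in \mathcal{B}$, since the contraction action of a degree-$D$ monomial kills every other degree-$D$ monomial. Thus $A = S/(\mathfrak{m}^{D+1} + (x^a))$ agrees with the truncated polynomial ring in degrees less than $D$, and $[A]_D = [S]_D / \langle x^a \rangle$. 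Invoking the earlier reduction for Artinian monomial quotients, it suffices to test the WLP with $L = x_1 + \cdots + x_n$. For $1 \le i \le D-1$ the map $\cdot L \colon [A]_{i-1} \to [A]_i$ is simply multiplication in $S$ and is injective, while for $i > D$ it is trivially of max rank. At $i = D$, a nonzero $f$ in the kernel would satisfy $Lf = \lambda x^a$ in $[S]_D$; if $\lambda = 0$ this contradicts $L$ being a nonzerodivisor in $S$, and if $\lambda \ne 0$ it places $x^a$ in the principal ideal $(L) \subset S$, contradicting the fact that $S/(L) \cong \mathbbm{k}[x_2, \ldots, x_n]$ is a domain in which the image of $x^a$ is a nonzero product of variable images.

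The step I expect to require the most care is the identification of $\Ann(\mathcal{B})$ in the second case, which uses $n \ge 2$ in an essential way; once $A$ takes the explicit form $S/(\mathfrak{m}^{D+1} + (x^a))$, the Lefschetz analysis reduces cleanly to the nonzerodivisor properties of $L$ in $S$ and of its image in the standard retract $\mathbbm{k}[x_2, \ldots, x_n]$.
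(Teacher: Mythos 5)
Your proof is correct and follows essentially the same route as the paper's: both split into the cases $\dim_{\mathbbm{k}}\soc{A}=\binom{n-1+D}{n-1}$ and $\binom{n-1+D}{n-1}-1$, identify the defining ideal as $\mathfrak{m}^{D+1}$ resp.\ $\mathfrak{m}^{D+1}+\langle x^a\rangle$, and reduce everything to the injectivity of $\times L$ from degree $D-1$ to degree $D$. The only cosmetic differences are that the paper pins down the annihilator by a Hilbert function comparison rather than your direct monomial computation, and proves injectivity in degree $D$ via the colon ideal $\langle x^a\rangle : L=\langle x^a\rangle$ rather than your equivalent observation that $x^a\notin\langle L\rangle$.
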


\begin{proof}
Denote the maximal homogenous ideal of $S$ by $\fm$. There are two cases for the size of $\soc  A$. 
If $\dim_{\mathbbm{k}} \soc  A = \binom{n-1+d}{n-1}$ then $A = S/\fm^{D+1}$, and  it follows that $A$ has the weak Lefschetz property. 
Assume that $\dim_{\mathbbm{k}} \soc A = \binom{n-1+d}{n-1} -1$, that is, all but one monomial of $\mon(n,D)$ are in the socle of $A$. Denote the missing monomial by $x^b$ and set $I = \Ann (\soc A)$. Then one checks that $\langle \fm^{D+1}, x^b \rangle \subseteq I$. Moreover, the inverse system generated by $\soc A$ contains $\mon (n, D-1)$. It follows that $A = S/I$ and $S/\langle \fm^{D+1}, x^b \rangle$ have the same Hilbert function, which implies 
\[
I = \langle \fm^{D+1}, x^b \rangle. 
\]

For any $j \ge 0$, consider the map $\varphi_j \colon [A]_{j-1} \to [A]_j$, defined by multiplication by $\ell = x_1 + \cdots + x_n$ on $A$. Since $[A]_j = [S]_j$ if $j < D$, the map $\varphi_j$ is injective in these cases. Clearly, $\varphi_{D+1}$ is surjective. We claim that $\varphi_{D}$ is injective. Indeed, consider any polynomial $g$ in $\ker \varphi_{D}$. Then one has $g \cdot \ell \in I$, and so $g \cdot \ell \in \langle x^b \rangle$. This gives 
\[
g \in \langle x^b \rangle : \ell = \langle x^b \rangle. 
\]
Since $\deg g = D-1 < D = \deg x^b$, we conclude $g = 0$, as claimed. Thus, we have shown that every map $\varphi_j$ has maximal rank, as desired. 
\end{proof}

This has the following consequence with respect to \Cref{quest:threshold}. 

\begin{cor}  
\begin{itemize}

\item[(i)] If $p (D) \ge  1 - (n-1)! D^{-(n-1)}$, 
then 
$\lim_{D \to \infty} \prob {A \text{ has WLP}} = 1$. 

\item[(ii)] If $p(D) \le (n-1)! D^{-(n-1)}$, then $\lim_{D \to \infty}  \prob {A \text{ has WLP}} = 1$.

\end{itemize}
\end{cor}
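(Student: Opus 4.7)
My plan is to use \Cref{lem:almost full socle} for part (i) and the classical WLP of complete intersections for part (ii), combined in each case with a binomial tail estimate on $|\mathcal B|$.

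For (i), I would set $N = \binom{n-1+D}{n-1}$ and let $Y$ count the monomials of $\mon(n,D)$ that are \emph{not} in $\mathcal B$, so that $Y \sim \mathrm{Bin}(N,q)$ with $q = 1-p$. By \Cref{lem:almost full socle}, the event $\{Y \le 1\}$ forces $A$ to have the WLP, hence
\[
\prob{A \text{ has WLP}} \ge \prob{Y \le 1} = (1-q)^N + Nq(1-q)^{N-1}.
\]
Using the identity $(n-1)! \cdot N = (D+1)(D+2)\cdots(D+n-1)$, the hypothesis $q \le (n-1)!\,D^{-(n-1)}$ yields explicit bounds on $Nq$ and $N^2 q^2$, and an asymptotic analysis of the right-hand side would complete this part.

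For (ii), the key observation is that if $|\mathcal B| = 1$, say $\mathcal B = \{x^b\}$, then Macaulay--Matlis duality forces
\[
A = S/\Ann(x^b) = S/\langle x_1^{b_1+1}, \ldots, x_n^{b_n+1}\rangle,
\]
a complete intersection, which has the WLP by the classical theorem of Stanley, Watanabe, and Reid--Roberts--Roitman cited earlier. Using the non-emptiness conditioning of \Cref{def:level model 1}, I would reduce to estimating
\[
\prob{A \text{ has WLP}} \ge \prob{|\mathcal B| = 1 \,\big|\, |\mathcal B| \ge 1} = \frac{Np(1-p)^{N-1}}{1-(1-p)^N},
\]
and argue that the right-hand side tends to $1$ under $p \le (n-1)!\,D^{-(n-1)}$.

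The principal obstacle I anticipate is the tightness of the stated thresholds: both $Nq$ (in (i)) and $Np$ (in (ii)) are only manifestly bounded, not vanishing, at the boundary values, so the elementary binomial estimates alone give non-vanishing failure probabilities. Completing the proof will most likely require supplementing the above probabilistic bounds with an auxiliary algebraic argument showing that the rare configurations with $Y \ge 2$ (respectively $|\mathcal B| \ge 2$) still produce WLP for all but a negligible fraction of choices of the extra missing monomials (respectively extra socle generators); in other words, that among those outcomes, the set of ``bad'' configurations obstructing WLP has density tending to zero as $D \to \infty$.
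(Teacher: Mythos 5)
Your approach is the same as the paper's: part (i) is reduced to \Cref{lem:almost full socle} via a count of the monomials missing from the socle, and part (ii) to the Stanley--Watanabe--Reid--Roberts--Roitman theorem for monomial complete intersections. So there is no methodological divergence to report. What is worth saying is that the ``principal obstacle'' you flag at the end is genuine, and the paper's own proof does not overcome it either: the paper argues purely from the expectation $\binom{n-1+D}{n-1}\,p(D)$ and implicitly assumes that the socle size concentrates at $N:=\binom{n-1+D}{n-1}$ (resp.\ at $1$), which fails at the stated threshold. Concretely, at the boundary value $q(D)=(n-1)!\,D^{-(n-1)}$ one has
\[
Nq \;=\; \prod_{i=1}^{n-1}\Bigl(1+\tfrac{i}{D}\Bigr)\;\longrightarrow\;1,
\]
so your variable $Y\sim\mathrm{Bin}(N,q)$ converges in distribution to a Poisson variable of mean $1$ and $\prob{Y\le 1}\to 2/e<1$; likewise in (ii), $Np\to 1$ gives $\prob{\,|\mathcal B|=1 \mid |\mathcal B|\ge 1\,}\to 1/(e-1)<1$. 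Hence \Cref{lem:almost full socle} and the complete-intersection case alone cannot force the stated limits under the stated hypotheses, for you or for the paper.

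Two ways to close the gap: either strengthen the hypotheses to $1-p(D)=\littleoh{D^{-(n-1)}}$ in (i) and $p(D)=\littleoh{D^{-(n-1)}}$ in (ii) --- then $Nq\to 0$ (resp.\ $Np\to 0$), your binomial bounds give $\prob{Y=0}\ge 1-Nq\to 1$ (resp.\ $\prob{|\mathcal B|=1\mid|\mathcal B|\ge 1}\to 1$), and the argument is complete as written --- or supply the auxiliary algebraic input you anticipate, namely that among configurations with two or more exceptional monomials (two or more missing from the socle in (i), two or more socle generators in (ii)) the proportion failing the WLP tends to zero. Neither your proposal nor the paper supplies that second argument, so as it stands the corollary is only proved under the strengthened $\littleoh{\cdot}$ hypotheses.
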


\begin{proof}
The expected number of monomials of $\mon(n, D)$ in $\soc A$  is $\binom{n-1+D}{n-1} \cdot p(D)$. Hence the assumption in (i) implies that, as $D$ approaches infinity, the socle of $A$ is generated by at least $\binom{n-1+D}{n-1} - 1$ monomials, and so \Cref{lem:almost full socle}  shows that $A$ has the weak Lefschetz property. 

For (ii), we argue similarly. This time, the assumption implies that, as $D$ approaches infinity, the socle of $A$ is generated by at most one element. Since we are ignoring the case where no monomial is chosen, it follows that $A$ is a monomial complete intersection. It is a classical result that $A$ has the weak Lefschetz property in this case, see  \cite{stanley, watanabe, RRR} for different arguments. 
\end{proof}

We do not expect the above threshold functions to be optional and leave improvements for further investigations.

%%%

\subsection{Simulation results}
Finally, we simulate random level algebras as described in \Cref{def:level model 1}. In the  Table~\ref{table:RandomLevelWLPStatsFailure}, $\hat\omega$ represents the estimated probability  $\omega=\prob{A\mbox{ has the WLP}}$ for a random level algebra $A$. 

%-- RANDOM LEVEL ALGEBRAS via socle : 
%rate of sample with wlp = .52 -- for D50 n3 p.02
%rate of sample with wlp = 1 -- for D50 n3 p.0004
%rate of sample with wlp = 1 -- for D50 n3 p.000008
%rate of sample with wlp = .38 -- for D100 n3 p.01
%rate of sample with wlp = 1 -- for D100 n3 p.0001
%rate of sample with wlp = 1 -- for D100 n3 p.000001

\begin{table}[h]
	\begin{tabular}{  l | l}
	$n=3$, $D=50$ &  \\
	\hline
	parameter & $\hat{w}$ \\% proportion with WLP \\ 
	\hline
	 $p=0.000008$ &  1.00\\ 
	 $p=0.0004$ & 1.00  \\
	 $p=0.02$ & 0.52 \\  
	 $p=0.1$ &0.65\\
	 $p=0.2$ &0.87\\
	 $p=0.3$ &0.93\\
	 $p=0.4$ &0.66\\ % !!!!! 
	 $p=0.5$ &0.98\\ 
	 $p=0.6$ &0.79\\ 
	 $p=0.7$ &0.98\\ 
	 $p=0.8$ &1\\ 
	 $p=0.9$ &1\\ 
	\end{tabular}
\qquad 
	\begin{tabular}{  l | l}
	$n=3$, $D=100$&  \\
	\hline
	parameter  & $\hat{w}$ \\% proportion with WLP \\ 
	\hline
	 $p=0.000001$ &  1.0 \\ 
	 $p=0.0001$ & 1.0  \\
	 $p=0.01$ & 0.38 \\  
	 $p=0.1$ &0.64\\
	 $p=0.2$ &0.58\\
	 $p=0.3$ &0.48\\
	 $p=0.4$ &0.84\\
	 $p=0.5$ &0.96\\ 
	 $p=0.6$ &1\\ 
	 $p=0.7$ &0.7\\ 
	 $p=0.8$ &1\\ 
	 $p=0.9$ &1\\ 
	\end{tabular}
	\vspace{3mm}

	\caption{Estimated proportion $\hat{w}$ of  random level algebras, generated using \Cref{def:level model 1}, that have the weak Leftschetz property. Sample size for each table entry is $N=100$ when $D=50$ and $N=50$ when $D=100$.
	}
 \label{table:RandomLevelWLPStatsFailure} 
\end{table}

The reader might note that, for example in the case $n=3$ and $D=50$, we do not see  a clear pattern similar to random Artinian algebras. Namely there is not a clear threshold for emergence of the WLP with high probability, because the proportion of the sample with WLP varies with $p$ in ways we do not yet understand!

 %%%%%%%%%%%%%%%%%%%%%
%BIBLIOGRAPHY--   {\color{magenta}  - why are we not using .bib files? see randomized-ideals.bib} 
%\bibliographystyle{natbib}

\section*{Acknowledgements}
The authors are grateful for Dane Wilburne's initial simulations for this project when we started the investigation of unimodality and the WLP in random algebras. Dane was a PhD student at Illinois Tech at that time. 

UN was partially supported by Simons Foundation grant \#636513. SP was partially supported by the Simons Foundation's Travel Support for Mathematicians Gift 854770 and  DOE/SC award number \#1010629. During the final stages of this project, SP was in residence at the program `Algebraic Statistics and Our Changing World' hosted by the Institute for Mathematical and Statistical Innovation (IMSI), which is supported by the National Science Foundation (Grant No. DMS-1929348).

\end{document}